\documentclass[a4paper,twoside,english]{amsart}
\usepackage{amsmath,amssymb,amsthm,amscd} 
\usepackage{amssymb,fge} 
\usepackage{tikz-cd}
\usepackage{mathtools}
\usepackage{comment}
\usepackage{graphicx}
\usepackage[left=3.2cm,right=3.5cm,top=3cm,bottom=3cm]{geometry}
\usepackage{indentfirst}           
\usepackage{underscore}
\usepackage{enumitem}
\usepackage{relsize}
\usepackage{varwidth}
\usepackage{mathtools} 
\usepackage{hyperref}
\hypersetup{
    colorlinks=true,
    linkcolor=blue,
    filecolor=magenta,      
    urlcolor=cyan,
}
\urlstyle{same}
\usepackage{amssymb}
\usepackage[all]{xy}
\usepackage{todonotes}

\newtheorem{thm}{Theorem}
\newtheorem{cor}[thm]{Corollary}
\newtheorem{prop}[thm]{Proposition}
\newtheorem{lem}[thm]{Lemma}

\newcommand\scalemath[2]{\scalebox{#1}{\mbox{\ensuremath{\displaystyle #2}}}}

\theoremstyle{definition} 
\newtheorem{defin}[thm]{Definition}

\theoremstyle{remark}
\newtheorem{remark}[thm]{Remark}

\numberwithin{thm}{section}
\numberwithin{equation}{thm}
\newcommand{\be}{\begin{equation}}
\newcommand{\ee}{\end{equation}}

\usepackage[utf8]{inputenc}
\title[prime-powered images and irreducible polynomials in dynamical semigroups]{ Prime-powered images and irreducible polynomials in dynamical semigroups}
\date{August 2025}
\author[]{Aristaa Bhardwaj}
\author[]{Adrian Boyer-Paulet}
\author[]{Wade Hindes}
\author[]{Emma Qiu}
\author[]{Alexander Sun}

\begin{document}
\begin{abstract} Let $G=\langle x^d+c_1,\dots,x^d+c_s\rangle$ be a semigroup generated under composition for some $c_1,\dots,c_s\in\mathbb{Z}$ and some $d\geq2$. Then we prove that, outside of an exceptional one-parameter family, $G$ contains a large and explicit subset of irreducible polynomials if and only if it contains at least one irreducible polynomial. In particular, this conclusion holds when $G$ is generated by at least $s\geq3$ polynomials when $d$ is odd and at least $s\geq5$ polynomials when $d$ is even. To do this, we prove a classification result for prime powered iterates under $f(x)=x^d+c$ when $c\in\mathbb{Z}$ is nonzero. Namely, if $f^n(\alpha)=y^p$ for some $n\geq4$, some $\alpha,y\in\mathbb{Z}$, and some prime $p|d$, then $\alpha$ and $y^p$ are necessarily preperiodic and periodic points for $f$ respectively. Moreover, we note that $n=4$ is the smallest possible iterate for which one may make this conclusion.       
\end{abstract}
\maketitle

\section{Introduction}
Let $K$ be a field, let $f_1,\dots,f_s\in K[x]$ be a collection of polynomials with coefficients in $K$, and let $G=\langle f_1,\dots,f_s\rangle$ be the semigroup generated by $f_1,\dots,f_s$ under composition. For example, $\langle f\rangle$ is simply the set of iterates of $f$. One problem that arises naturally in arithmetic dynamics is to determine a set of conditions that ensure that the semigroup $G$ contains many irreducible polynomials in $K[x]$. To make this problem more precise,  we say that $G$ \emph{contains a positive proportion of irreducible polynomials over $K$} if \vspace{.05cm}
\[\liminf_{B\rightarrow\infty}\frac{\#\{g\in G\,: \deg(g)\leq B\;\text{and $g$ is irreducible over $K$}\}}{\#\{g\in G\,: \deg(g)\leq B\}}>0.
\vspace{.05cm}
\]
Clearly, a necessary condition that $G$ contains a positive proportion of irreducible polynomials is that it contains at least one such polynomial, but is this sufficient? We prove that the answer to this question is yes, outside of a one-parameter family of exceptional semigroups, when $K=\mathbb{Q}$ and $G$ is generated by unicritical polynomials with integral coefficients, all of the same degree. To do this, we build on previous work in \cite{PreperiodicPointsandABC,hindes2025proportion,hindes2023irreducible}. Moreover, in what follows, irreducible means irreducible over $\mathbb{Q}$.
\vspace{.05cm}
\begin{thm}\label{thm:main+irreducible} Let $G=\langle x^d+c_1,\dots, x^d+c_s\rangle$ for some $d \ge 2$ and some $c_1,\dots,c_s\in\mathbb{Z}$. Then one of the following statements must hold: \vspace{.1cm} 
\begin{enumerate}
    \item[\textup{(1)}] $G$ contains a positive proportion of irreducible polynomials if and only if it contains at least one irreducible polynomial.\vspace{.2cm}     
    \item[\textup{(2)}] $d\geq4$ is even and $\{c_1,\dots, c_s\}\subseteq \big\{y^p-y^{pd}\,,\;y^p\,,\;-y^p\,,\;-y^p-y^{pd}\big\}$  
    for some $y\in\mathbb{Z}$ and some prime $p|d$. \vspace{.2cm} 
      \item[\textup{(3)}] $d\geq5$ is odd and $\{c_1,\dots,c_s\}\subseteq \big\{y^p-y^{pd}\;,\;y^p\big\}$ 
    for some $y\in\mathbb{Z}$ and some prime $p|d$. \vspace{.1cm}  
\end{enumerate}
\end{thm}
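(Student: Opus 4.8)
The plan is to reduce irreducibility of a composition to a Capelli-type criterion, recast "positive proportion" as a survival statement on the tree of words, and then use the prime-power classification theorem to bound the obstructions. First I would recall (from the Capelli--Ore theorem together with the refinements in \cite{hindes2023irreducible}) an explicit criterion for when a composition $g_n := f_{i_n}\circ\cdots\circ f_{i_1}$, with $f_i = x^d+c_i$, is irreducible over $\mathbb{Q}$. Writing $g_k := f_{i_k}\circ\cdots\circ f_{i_1}$ for the partial compositions and $v_k := g_k(0)$ for the forward orbit of $0$, the criterion states that $g_n$ is irreducible if and only if $-c_{i_1}$ is not a $p$-th power for any prime $p\mid d$ and, for each $2\le k\le n$, a sign-normalized version of $v_k$ is not a $p$-th power for any $p\mid d$, together with the extra quartic condition ruling out values of the form $-4b^4$ that is forced when $4\mid d$. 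The decisive structural feature is that this criterion is \emph{absorbing}: once some $v_k$ becomes a $p$-th power, every longer composition extending the prefix $(i_1,\dots,i_k)$ is reducible.

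Since every length-$n$ word produces a polynomial of degree $d^n$, counting $g\in G$ with $\deg(g)\le B$ reduces to counting words of length at most $\log_d B$, of which there are $s^n$ at each level $n$. The absorbing property identifies the irreducible elements of $G$ with the nodes of the subtree $T$ of the full $s$-ary word tree consisting of those prefixes along which no $v_k$ has yet been a $p$-th power. Thus "positive proportion" becomes exactly the statement that $\#\{\text{nodes of }T\text{ at depth }n\}/s^n$ has positive $\liminf$; equivalently, $T$ must retain a definite fraction of the branching of the full tree, so that its exponential growth rate is the full $s$.

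The analytic and number-theoretic heart is to bound, for a surviving prefix with orbit value $v=v_k$, the number of \emph{bad} one-step extensions $i\mapsto f_i(v)=v^d+c_i$ that produce a $p$-th power. Here the classification theorem for prime-powered iterates is the essential input: because $f_i^{\,m}(\alpha)=y^p$ for $m\ge 4$ forces $\alpha$ to be preperiodic, the orbit values that can be driven onto $p$-th powers are confined to thin, preperiodic sets, so for each fixed generator this occurs only finitely often along any orbit. Consequently, outside the degenerate configurations only boundedly many of the $s$ available children of each node are bad, a positive fraction survives at every level, and $T$ attains full growth rate $s$, which yields the desired positive proportion. The hypothesis that $G$ contains at least one irreducible polynomial is precisely what guarantees that $T$ is nonempty, i.e. that there is some surviving prefix from which to run the branching.

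Finally I would isolate exactly when this survival argument can break down, which is what produces the exceptional families. Failure requires that, for essentially every generator, $v^d+c_i$ be \emph{systematically} a $p$-th power, i.e. that the orbit of $0$ (or of a generator's critical value) be trapped onto the $p$-th-power periodic points and their preimages identified by the classification theorem. Solving $f(y^p)=y^p$ and $f(0)=\pm y^p$ pins the admissible constants down to $c\in\{\,y^p-y^{pd},\,y^p,\,-y^p,\,-y^p-y^{pd}\,\}$; and since $(-y^p)^d$ equals $y^{pd}$ precisely when $d$ is even, the point $-y^p$ can be forced periodic only in the even case, so that, after also accounting for the quartic $4\mid d$ condition, the even case retains four admissible constants and the odd case only two, giving (2) and (3). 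The main obstacle is exactly this last step together with the third: the classification theorem governs a single map and only for iterates $n\ge4$, so the crux is to transfer it to the multi-map orbits of the semigroup and to dispose by hand of the short compositions $n\le 3$ that it does not cover (recall that $n=4$ is minimal), all while tracking the sign and $4\mid d$ subtleties that distinguish the two exceptional families.
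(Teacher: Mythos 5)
Your proposal assembles the right ingredients (Capelli's lemma, the prime-power classification theorem, the computation of the exceptional constants), but the engine of your argument --- the level-by-level survival count on the word tree --- rests on a step that the classification theorem does not supply. You claim that at each surviving node with orbit value $v$, only boundedly many of the $s$ children are bad, i.e.\ that $v^d+c_i$ is a $p$-th power for only boundedly many $i$, ``because $f_i^{\,m}(\alpha)=y^p$ for $m\ge 4$ forces $\alpha$ to be preperiodic.'' But a bad child is a \emph{single} application of one generator to $v$: this is the case $n=1$, which Theorem \ref{thm:pth+powered+iterate+intro} explicitly does not cover (the paper's own examples, $f^2(r)=-r^d$ and $f^3(22)=114^2$ for $f=x^2-460$, show that even $n\le 3$ iterates of a single map can land on powers of non-preperiodic points, so no version of the theorem with small $n$ is available). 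Worse, the orbit values $v_k$ are produced by \emph{different} generators at each step, so the hypothesis of the classification theorem --- $N$ consecutive iterates of the \emph{same} map landing on a power --- is never available along a generic branch. Your final sentence concedes that transferring the single-map theorem to multi-map orbits is ``the crux,'' but the proposal contains no mechanism for doing it; this is not a technicality to be disposed of, it is the entire difficulty.

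The paper's route is structurally different and sidesteps this issue: rather than pruning a tree level by level, it exhibits a single fixed word $F$ of length at most $6$ --- namely $f_1^3$ (or $f_1^4$ when $d=2$), or $f_1^3\circ f_2^3$, or $f_1^3\circ f_2\circ f_1$ for a suitably chosen irreducible generator $f_1$ and auxiliary generator $f_2$ --- such that \emph{every} $F\circ g$ with $g\in G$ is irreducible (Propositions \ref{prop:no+special+irreducibles}, \ref{prop:special+irreducible+odd}, \ref{prop:special+irreducible+even}). The point of the leading block $f_1^3$ is precisely that reducibility of $F\circ g$ forces, via repeated application of Proposition \ref{prop:irreducibility+test}, an identity $f_1^3(\alpha)=\pm y^p$ involving three consecutive iterates of the \emph{same} map, which is where Theorem \ref{thm:pth+powered+iterate} finally applies and forces $y^p$ to be a powered fixed point of $f_1$; ruling this out, or passing to $f_2$ when it cannot be ruled out, is exactly what produces the exceptional constant sets in (2) and (3) (using uniqueness of fixed points, Remark \ref{rem:fixed+pts+unique}, and a separate treatment of $c_2\in\{0,-1\}$, none of which your sketch carries out). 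The positive proportion then follows trivially from freeness of $G$ (Proposition \ref{prop:free}): the set $\{F\circ g\}$ has density at least $1/s^{6}$, with no growth-rate analysis needed. A smaller point: your Capelli criterion is stated as an equivalence with an ``absorbing'' converse, but only the forward direction (reducible $\Rightarrow$ the relevant orbit value is a power) holds and is needed for the lower bound.
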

There is a dynamical reason for the exceptional semigroups in statements (2) and (3). Namely, in these cases the \emph{critical orbit} (i.e., the semigroup orbit of zero) contains a $p$th powered fixed point for, perhaps the only, irreducible map in the generating set of $G$; see Section 3 for details. On the other hand, outside of the exceptional semigroups above, we prove a more explicit statement: there are $f_1,f_2\in G$ such that one of the following subsets, 
\[
\big\{f_1^3\circ g\,:\,g\in G\big\}
\;\;\;\text{or}\;\;\;
\big\{f_1^3\circ f_2\circ f_1\circ g\,:\,g\in G\big\}
\;\;\;\text{or}\;\;\;
\big\{f_1^{3}\circ f_2^{3}\circ g\,:\,g\in G\big\},
\]
is a set of irreducible polynomials, and we determine when each type is needed in Propositions \ref{prop:no+special+irreducibles}, \ref{prop:special+irreducible+odd}, and \ref{prop:special+irreducible+even} respectively. Furthermore, it is likely that statement (1) of Theorem \ref{thm:main+irreducible} holds without exception; for one, the exceptional cases were successfully resolved in small degree (i.e., when $d=2$ and $d=3$) using rational points techniques \cite{hindes2025proportion}. However, these techniques become impractical as $d$ grows, and so a new method is needed in large degree.       

Moreover, we deduce the following useful consequence of Theorem \ref{thm:main+irreducible}. Namely, if $G$ contains an irreducible polynomial and has sufficiently many generators, then it contains a positive proportion of such polynomials; compare to \cite[Corollary 1.13]{PreperiodicPointsandABC}. With this in mind, we assume throughout that the coefficients $c_1,\dots,c_s\in\mathbb{Z}$ are distinct, since otherwise, we may simply delete generators from $G$.

\begin{cor}\label{cor:irreducible} Let $G=\langle x^d+c_1,\dots, x^d+c_s\rangle$ for some $d \ge 2$ and some $c_1,\dots,c_s\in\mathbb{Z}$. Moreover, assume that $s\geq3$ if $d\geq5$ is odd and $s\geq5$ if $d\geq4$ is even. Then $G$ contains a positive proportion of irreducible polynomials if and only if it contains at least one irreducible polynomial.   
\end{cor}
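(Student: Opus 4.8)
The plan is to derive this corollary directly from Theorem \ref{thm:main+irreducible} by a short counting argument, exploiting the fact that the exceptional coefficient sets appearing in statements (2) and (3) have bounded cardinality. By Theorem \ref{thm:main+irreducible}, for any $G$ of the given form one of the three listed alternatives holds, and alternative (1) is precisely the desired equivalence. Hence it suffices to show that, under the stated hypotheses on $s$ and $d$, neither (2) nor (3) can occur; the conclusion (1) then follows by elimination.

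First I would treat the two exceptional families separately, using as the key input the standing convention (adopted just before the corollary) that the coefficients $c_1,\dots,c_s$ are distinct. Suppose $d\geq4$ is even, so that we must rule out (2). The exceptional set $\{y^p-y^{pd},\,y^p,\,-y^p,\,-y^p-y^{pd}\}$ has at most four elements for any fixed $y\in\mathbb{Z}$ and prime $p\mid d$. Since we assume $s\geq5$ and the $c_i$ are distinct, the set $\{c_1,\dots,c_s\}$ has at least five elements and therefore cannot be contained in a four-element set; thus (2) is impossible. Analogously, if $d\geq5$ is odd I would rule out (3): the set $\{y^p-y^{pd},\,y^p\}$ has at most two elements, whereas $s\geq3$ and distinctness force $\{c_1,\dots,c_s\}$ to have at least three elements, again a contradiction.

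It remains only to address the low-degree cases $d=2$ and $d=3$, for which the corollary imposes no hypothesis on $s$. Here the point is simply that alternatives (2) and (3) are vacuous: statement (2) requires $d\geq4$ and statement (3) requires $d\geq5$, so for $d\in\{2,3\}$ neither can hold, and Theorem \ref{thm:main+irreducible} forces alternative (1). Combining these observations, in every case permitted by the hypotheses only alternative (1) survives, which is exactly the claimed equivalence.

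As for difficulty, there is no serious obstacle once Theorem \ref{thm:main+irreducible} is in hand: the entire argument is a pigeonhole comparison between the number $s$ of distinct generators and the cardinality of the exceptional coefficient sets. The only point requiring care is to invoke the distinctness convention for the $c_i$ and to match the parity and degree thresholds correctly ($s\geq5$ against the four-element even family, $s\geq3$ against the two-element odd family); aligning these bookkeeping constraints is essentially the whole content of the proof.
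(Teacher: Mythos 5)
Your argument is correct and is exactly the proof the paper intends: the standing convention that the $c_i$ are distinct, combined with the fact that the exceptional sets in statements (2) and (3) of Theorem \ref{thm:main+irreducible} have at most four and two elements respectively, rules out those alternatives under the hypotheses $s\geq5$ (even $d\geq4$) and $s\geq3$ (odd $d\geq5$), while for $d\in\{2,3\}$ those alternatives are vacuous. No difference from the paper's route.
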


As in the case of iterating a single function, our irreducbility results stem from the ability to avoid $p$th powers in orbits; see, for instance, \cite{hamblen2015density,jones2008density}. However, this issue is more subtle for semigroups with at least two generators, since the orbits in this case tend to be much larger \cite{bell2023counting}. In particular, the main tool we use to prove the irreduciblity statements above is the following dynamical $p$th power classification theorem; compare to similar results in \cite[Proposition 5.7]{PreperiodicPointsandABC} and \cite[Theorems 2.3 and 2.7]{hindes2025proportion}. \vspace{.1cm}

\begin{thm}\label{thm:pth+powered+iterate+intro} Let $f(x)=x^d+c$ for some nonzero $c\in\mathbb{Z}$ and $d\geq2$. Moreover, assume that $f^n(\alpha)=\epsilon y^p$ for some $\alpha,y\in\mathbb{Z}$, some $\epsilon=\pm{1}$, some prime $p|d$, some $n\geq4$ if $d=2$, and some $n\geq3$ if $d\geq3$. Then $\alpha$ is preperiodic and $\epsilon y^p$ is periodic for $f$ respectively.
\vspace{.1cm}
\end{thm}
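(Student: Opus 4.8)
The plan is to set $\beta = f^{n-1}(\alpha)$ and write $d = pm$, so that the hypothesis $f^n(\alpha) = \epsilon y^p$ becomes
\[
\epsilon y^p - (\beta^m)^p = c,
\]
a difference (or, after absorbing signs, a sum) of two $p$th powers equal to the fixed nonzero constant $c$. First I would bound its solutions uniformly. For odd $p$ one writes $\epsilon y^p = (\epsilon y)^p$ and uses the mean-value estimate $|w^p - v^p| \ge p\min(|w|,|v|)^{p-1}$ valid whenever $w \ne v$ are integers of the same sign (the opposite-sign case being even easier), together with $|w|^p \le |c| + |v|^p$; this forces $|\beta^m|$ and $|y|$, hence $|\beta|$ and $|y|$, to be at most an explicit constant depending only on $c$ and $p$. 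For $p = 2$ one instead factors $y^2 - (\beta^m)^2 = (y - \beta^m)(y + \beta^m) = c$, or treats $y^2 + (\beta^m)^2 = -c$ as a sum of two squares; either way the same conclusion holds. The crucial feature is that these bounds are independent of $n$ and of $\alpha$.

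Second, I would pull the boundedness backward along the orbit. Writing $a_j := f^j(\alpha)$, the relation $a_j^d = a_{j+1} - c$ shows that a bound $|a_{j+1}| \le M$ yields $|a_j| \le (M + |c|)^{1/d}$, so taking preimages contracts large values (the logarithm of the bound is multiplied by roughly $1/d$ at each step). Iterating this contraction from the bound on $|a_n| = |y|^p$, the successive bounds collapse below the escape radius $T(c,d)$ of $f$ — the threshold past which the forward orbit of any integer escapes to infinity — after only an absolutely bounded number of steps. Hence, once $n$ exceeds that small threshold, some $a_{n-k}$ lies in the trapped region; being an integer with bounded forward orbit it is preperiodic, and therefore $\alpha$ is preperiodic as well. (A canonical-height bound $\hat h(\alpha) = \hat h(f^n\alpha)/d^n \le (p\log|y| + O(1))/d^n$ gives the same qualitative conclusion, but only for a $c$-dependent cutoff; it is the explicit contraction that will eventually yield a uniform one.) Because the number of contraction steps needed is absolutely bounded, and is smaller when $d$ is larger, this is what produces a cutoff of the shape $n \ge 4$ for $d = 2$ and $n \ge 3$ for $d \ge 3$.

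Third, to sharpen the cutoff to exactly these values and to upgrade \emph{preperiodic} to the stronger assertion that $\epsilon y^p$ is \emph{periodic}, I would feed back the extra arithmetic constraints from the remaining iterates: each $a_{j} - c = a_{j-1}^d$ must be a perfect $d$th power, hence a perfect $p$th power. Imposing that $\beta - c$ (and, when $d = 2$, also $a_{n-2} - c$) be a perfect power eliminates precisely the large ``stray'' solutions of the difference-of-powers equation from Step 1, cutting the solution set down to a short explicit list. I would then verify directly that every surviving solution is a periodic point of $f$: solving $f^k(\epsilon y^p) = \epsilon y^p$ in the relevant small cases recovers relations exhibiting $\epsilon y^p$ as a periodic point, the extreme being the fixed-point relation $c = \epsilon y^p - \epsilon^d y^{pd}$ — exactly the configurations appearing in the exceptional families of Theorem \ref{thm:main+irreducible}. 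Since $\alpha$ maps onto this periodic point, it is preperiodic.

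I expect the main obstacle to be Step 3 rather than the bounding or the contraction. Making the threshold uniform and sharp — so that $n \ge 4$, respectively $n \ge 3$, suffices while, as the abstract notes, $n = 3$ genuinely fails for $d = 2$ — requires tracking the explicit constants through the contraction and carrying out a finite but delicate case analysis over $\epsilon = \pm 1$, the parity of $p$, and the sign of $c$, all while ruling out any $p$th power sitting in the strict tail of a preperiodic orbit. The contraction estimate of Step 2 and the power bound of Step 1 are comparatively routine; it is the interaction of the $p$th-power condition with the $d$th-power condition on consecutive orbit terms that carries the real weight.
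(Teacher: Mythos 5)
There is a genuine gap in your Step 2, and it sits at the heart of the argument. Knowing that $|a_{n-k}|$ lies below an ``escape radius'' $T(c,d)$ does not place $a_{n-k}$ in the trapped region: the trapped region is the set of integers whose \emph{entire forward orbit} stays bounded, whereas your backward contraction only bounds $a_0,\dots,a_n$ and says nothing about $a_{n+1},a_{n+2},\dots$. The paper's own example makes this concrete: for $f=x^2-460$ and $\alpha=22$ one has $f^3(22)=114^2$ and $a_0,a_1,a_2$ all modest, yet $22$ is not preperiodic. Moreover, your contraction $|a_j|\le(M+|c|)^{1/d}$ stabilizes at roughly $(2|c|)^{1/d}$, and the disk $\{|x|\le(2|c|)^{1/d}\}$ contains on the order of $|c|^{1/d}$ integers, so for a fixed small $n$ no pigeonhole or finiteness conclusion is available, and your Step 3 case analysis has no finite list of cases to run over. (The canonical-height aside does not rescue this: $\hat h(\alpha)\le(p\log|y|+O(1))/d^n$ forces $\hat h(\alpha)=0$ only given a uniform positive lower bound on canonical heights of non-preperiodic integers, which is a hard problem in its own right.)

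What the paper does instead is replace the disk around the origin by an annulus anchored at $\rho=|c|^{1/d}$. Lemma \ref{lem:pth+power+bound} (your Step 1, essentially) gives $|f^{N-1}(\alpha)|\le\sqrt{|c|}$, and the two-sided escape estimate of Lemma \ref{lem:compare+to+root+generic} shows that \emph{any} integer $\beta$ with $||\beta|-\rho|\ge1$ --- whether $|\beta|$ exceeds $\rho$ or falls short of it --- satisfies $|f^{m}(\beta)|>\sqrt{|c|}$ for all $m\ge1$ (resp.\ $>|c|$ for $m\ge2$ when $d=2$). Applied to $\beta=\alpha$ and $\beta=f(\alpha)$, this contradiction pins both into the set $\{b\in\mathbb{Z}:||b|-\rho|<1\}$, which contains at most four integers, whence $\big||\alpha|-|f(\alpha)|\big|\le1$; the three cases $|f(\alpha)|-|\alpha|\in\{0,\pm1\}$ are then dispatched by hand, each yielding either a contradiction via Lemma \ref{lem:pth+power+bound} or an explicit fixed point or $2$-cycle, which simultaneously delivers the periodicity of $\epsilon y^p$. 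It is this anchored trapping interval of width $2$ --- not a bound of the shape $|a_j|\le T$ --- that makes the sharp thresholds $n\ge3$ and $n\ge4$ attainable, and it is the idea missing from your proposal.
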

\begin{remark} This result fits nicely with the following heuristic in arithmetic dynamics: if an orbit possesses a special arithmetic property (in this case, has a large $p$th power), then there is a good dynamical or geometric reason why (in this case, either $f=x^d$ is itself a $p$th power or the orbit in question is a finite set).      
\end{remark}
\begin{remark} Note that Theorem \ref{thm:pth+powered+iterate} is false when $c=0$: in this case, $f^n(\alpha)$ is a $d$th power for all $\alpha\in\mathbb{Z}$ and all $n\geq1$, even though $|\alpha|\geq2$ is not preperiodic for $x^d$. Moreover, the lower bound on the iterate $n$ in the statement of Theorem \ref{thm:pth+powered+iterate+intro} is the smallest possible lower bound which ensures that $\alpha$ is preperiodic. For example, if $f(x)=x^d-r^d$ for some $r\geq2$, then $f^2(r)=-r^d$ and $\alpha=r$ is not preperiodic for $f$.   
%otherwise $f_{d,-r^d}$ is post-critically finite and so $r\in\{0,-1\}$. 
Likewise, if $f(x)=x^2-460$, then we find that $f^3(22)=(114)^2$ and $\alpha=22$ is not preperiodic for $f$. Hence, $n$ cannot be decreased to $3$ when $d=2$ and cannot be decreased to $2$ when $d>2$ in general. In particular, on several fronts, Theorem \ref{thm:pth+powered+iterate+intro} is the strongest possible statement regarding $p$th powered images.        
\end{remark}
An outline of this paper is as follows: we prove the dynamical $p$th power classification theorem in Section \ref{sec:pth+power} and prove the irreducibility results in Section \ref{sec:irreduciblity}. 
\\[5pt] 
\noindent\textbf{Acknowledgements:} We thank the Mathworks honors program at Texas State University for supporting this research.
\section{Prime-powered iterated images}\label{sec:pth+power}
The goal of this section is to prove the following classification result for prime-powered iterated images under unicritical polynomials defined over the integers; compare to Theorem \ref{thm:pth+powered+iterate+intro} from the Introduction. Moreover, see Remark \ref{rem:pth+powers+implies+intro+version} for an explanation of why the result below implies the version from the Introduction. 

To state this result, recall that a point $\alpha$ is called \emph{periodic} if $f^n(\alpha)=\alpha$ for some $n\geq1$ and called \emph{preperiodic} if $f^m(\alpha)$ is periodic for some $m\geq0$; equivalently, $\alpha$ is preperiodic if the orbit of $\alpha$ under $f$ is a finite set.
\vspace{.1cm} 
\begin{thm}\label{thm:pth+powered+iterate} Let $f(x)=x^d+c$ for some nonzero $c\in\mathbb{Z}$ and some $d\geq2$. Moreover, assume that $f^N(\alpha)=\epsilon y^p$ for some $\alpha,y\in\mathbb{Z}$, some prime $p|d$, and some $\epsilon=\pm{1}$, where $N=4$ if $d=2$ and $N=3$ when $d\geq3$. Then $\alpha$ is preperiodic point for $f$ and $\epsilon y^p$ is a periodic point for $f$. More specifically, the following statements hold:\vspace{.1cm}  
\begin{enumerate}
\item[\textup{(1)}] If $d=2$, then $\alpha=\pm{\epsilon}y^2$ and $\epsilon y^2$ is a fixed point or point of exact period $2$ for $f$.   \vspace{.2cm}  
\item[\textup{(2)}] If $d\geq3$ is odd, then $\alpha=\epsilon y^p$ is a fixed point for $f$.\vspace{.2cm}   
\item[\textup{(3)}] If $d\geq4$ is even and $c\neq-1$, then $\alpha=\pm{\epsilon y^p}$ and $\epsilon y^p$ is a fixed point for $f$. \vspace{.2cm} 
\item[\textup{(4)}] If $d\geq4$ is even and $c=-1$, then $\alpha=\pm{\epsilon y^p}$ and $\epsilon y^p\in\{0,-1\}$ is a point of exact period $2$ for $f$.  
\end{enumerate}
\end{thm}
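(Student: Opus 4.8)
The plan is to peel off the last iterate and use the hypothesis $p\mid d$ to convert the top equation into a difference (or sum) of $p$-th powers equal to the fixed constant $c$, and then to play the resulting bound against the escape dynamics of $f$. Writing $a_i=f^i(\alpha)$ and $d=pm$, the equation $a_N=\epsilon y^p$ becomes
\[
\epsilon\,y^p-\bigl(a_{N-1}^{\,m}\bigr)^p=c .
\]
Setting $u=a_{N-1}^{\,m}$, I would first treat this as a two-variable equation in $y$ and $u$. Since $c\neq0$, the factorization $y^p\mp u^p=(y\mp u)\bigl(\sum_{i} y^i u^{p-1-i}\bigr)$, together with $y^2+u^2=-c$ in the remaining case $p=2$, $\epsilon=-1$, forces the smaller factor to divide $c$ while the cofactor is at least $\max(|y|,|u|)^{p-1}$. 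This bounds $|u|$, hence $|a_{N-1}|$, explicitly in terms of $|c|$, and simultaneously shows that $u=a_{N-1}^{\,m}$ differs from $\pm y$ by a divisor of $c$. The crude bound here is worst for $p=2$ with $\epsilon=1$, where one only gets $|a_{N-1}|\lesssim|c|/2$; this is exactly why $d=2$ will be the binding case.

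Next I would set up the growth dichotomy for integer orbits of $f$: there is an escape radius $\rho=\rho(c,d)$, essentially the largest real fixed point with $\rho^d=\rho+|c|$, such that once $|a_k|>\rho$ the orbit grows doubly exponentially and never returns, while outside a thin annulus below $\rho$ the orbit escapes within a single step. Thus the orbit of $\alpha$ either is bounded, hence finite and preperiodic, or else escapes. Running the trivial backward recursion $|a_{i-1}|\le(|a_i|+|c|)^{1/d}$ from the bound on $a_{N-1}$ shows that the early terms $a_0,\dots,a_{N-3}$ all satisfy $|a_i|\lesssim\rho$. The heart of the argument is then to contradict the assumption that $\alpha$ is not preperiodic: an escaping orbit whose early terms are $\lesssim\rho$ must leave the thin safe annulus almost immediately, since an integer cannot linger at the irrational fixed point, and then grows fast enough that $|a_{N-1}|$ far exceeds the factoring bound $\lesssim|c|/2$. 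The number of available growth steps between index $N-3$ and index $N-1$ is what pins down $N$: two steps are needed to overshoot $|c|/2$ from $\rho$ when $d=2$, giving $N=4$, whereas the faster contraction and growth for $d\ge3$ leave enough room already at $N=3$.

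I expect this calibration to be the main obstacle. For $d=2$ the backward caps for $N=3$ and $N=4$ differ only in lower-order terms, and the example $f(x)=x^2-460$ with $f^3(22)=114^2$ shows the margin is razor-thin: here $\rho\approx21.95$, the $N=3$ cap admits $\alpha=22$, and only the extra iterate at $N=4$ pushes the cap below $22$ and forces $a_3$ past the factoring bound. Making these estimates rigorous and uniform in $c$, while separating the easy regime $\epsilon=-1$ (where $|a_{N-1}|\le\sqrt{|c|}$) from the binding regime $\epsilon=1$, and the small-$|c|$ cases from the asymptotic ones, is the delicate part of the proof.

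Finally, to promote preperiodicity to the explicit classification (1)--(4), I would overlay the $p$-th power condition on the short, explicit list of integer preperiodic points of $x^d+c$: the only cycles are fixed points, together with the two-cycle $\{0,-1\}$ when $d$ is even and $c=-1$. The relation that $u=a_{N-1}^{\,m}$ differs from $\pm y$ by a divisor of $c$ then forces $a_{N-1}=\pm a_N$ and hence $\alpha=\pm\epsilon y^p$, with the sign immaterial for $d$ even and $\alpha=\epsilon y^p$ for $d$ odd. Periodicity of $\epsilon y^p$ follows formally, since $\alpha=\pm\epsilon y^p$ gives $f(\alpha)=f(\epsilon y^p)$ and therefore $f^N(\epsilon y^p)=f^{N-1}(f(\alpha))=\epsilon y^p$; a final finite check distinguishes the fixed-point conclusions (2),(3) and part of (1) from the period-two conclusions in (1) and (4).
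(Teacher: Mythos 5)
Your overall strategy is the same as the paper's: bound the penultimate iterate $|f^{N-1}(\alpha)|$ by a difference-of-$p$th-powers factoring argument (this is the paper's Lemma \ref{lem:pth+power+bound}, with the same worst case $B(c)=|c|$ when $d=p=2$ and $\epsilon=1$), then show that the earlier iterates are trapped in a thin annulus about $\rho=\sqrt[d]{|c|}$ or else the orbit escapes past that bound (Lemma \ref{lem:compare+to+root+generic}), and finally classify. However, the step you defer as ``calibration'' is not a routine estimate but the entire content of the theorem, and the mechanism you propose for it is wrong. Once one knows that $\alpha$ and $f(\alpha)$ both lie within distance $1$ of $\rho$, hence $|f(\alpha)|-|\alpha|\in\{0,\pm 1\}$, your claim that a non-preperiodic orbit ``must leave the thin safe annulus almost immediately, since an integer cannot linger at the irrational fixed point'' is false as a dichotomy: for $d=2$ the case $|f(\alpha)|=|\alpha|+1$ forces $c=-|\alpha|^2-|\alpha|-1$, and then $\{|\alpha|,\,|\alpha|+1\}$ (up to signs) is a genuine integer $2$-cycle, so the orbit lingers in the annulus forever. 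These configurations are not to be ruled out; they must be recognized as the period-$2$ conclusions of statement (1). The paper's proof does exactly this via a three-way case analysis on $|f(\alpha)|-|\alpha|$, deriving in the non-preperiodic sub-cases an explicit lower bound on $|f^2(\alpha)|$ (using Lemma \ref{lem:consecutive+pth+powers}) that contradicts the factoring bound, and in the remaining sub-cases reading off the fixed point or $2$-cycle directly. Your sketch contains no substitute for this analysis.

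The final classification step also rests on a false assertion: the integer cycles of $x^d+c$ are not ``fixed points together with the two-cycle $\{0,-1\}$ when $d$ is even and $c=-1$.'' For $d=2$ every pair $\{a,\,-1-a\}$ is a $2$-cycle of $x^2+c$ with $c=-a^2-a-1$, and these occur in conclusion (1): for $f=x^2-3$ one has the $2$-cycle $\{1,-2\}$ and $f^4(\pm 1)=1=y^2$. Overlaying the $p$th-power condition on your short list would therefore miss precisely the cases statement (1) is designed to capture. Likewise, the inference that ``$u=a_{N-1}^{m}$ differs from $\pm y$ by a divisor of $c$'' forces $a_{N-1}=\pm a_N$ is a non sequitur as stated; in the paper this identity emerges only from the case analysis above, not from the divisibility relation. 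Finally, note that the annulus argument needs $\rho>1$ and the quantitative hypotheses $|c|\geq 2$ (resp.\ $|c|\geq 3$ when $d=2$), so the values $c=\pm 1,\pm 2$ require a separate finite check (the paper uses preimage computations and a congruence mod $8$); you flag this but do not supply it.
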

\begin{remark} We note that Theorem \ref{thm:pth+powered+iterate} is an improvement on Theorems 2.3 and 2.7 of \cite{hindes2025proportion} in two ways: we do not assume that the polynomial $f$ is irreducible nor do we assume that $f$ has prime degree. Likewise, Theorem \ref{thm:pth+powered+iterate} may be viewed as an unconditional and explicit version of \cite[Proposition 5.7]{PreperiodicPointsandABC} in the special case of rational integers.     
\end{remark}
To prove this result, we make several observations. 
\begin{lem}\label{lem:consecutive+pth+powers} Let $x>1$ and $d\geq2$. Then $x^d-(x-1)^d>x^{d-1}$.     
\end{lem}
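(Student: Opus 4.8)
The plan is to use the elementary factorization of a difference of $d$th powers rather than any analytic estimate. Setting $a=x$ and $b=x-1$, so that $a-b=1$, the standard identity $a^d-b^d=(a-b)\sum_{j=0}^{d-1}a^{d-1-j}b^{j}$ gives immediately
\[
x^d-(x-1)^d=\sum_{j=0}^{d-1}x^{d-1-j}(x-1)^{j}.
\]
The idea is then to isolate the $j=0$ summand, which is exactly $x^{d-1}$, and argue that the remaining summands contribute a strictly positive amount.

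First I would observe that the hypotheses $x>1$ and $d\geq 2$ guarantee that every factor appearing is positive: indeed $x>0$ and $x-1>0$, so each term $x^{d-1-j}(x-1)^{j}$ is positive. In particular the full sum is a sum of positive reals. Next I would note that because $d\geq 2$, the index range $0\leq j\leq d-1$ contains at least two values, so in addition to the $j=0$ term there is at least the $j=1$ term, namely $x^{d-2}(x-1)$, which is strictly positive since $x>1$. Collecting these observations, the displayed sum equals $x^{d-1}$ plus at least one strictly positive term, which yields the strict inequality $x^d-(x-1)^d>x^{d-1}$.

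There is essentially no serious obstacle here; the only point requiring a moment of care is confirming that at least one summand beyond the $j=0$ term is genuinely present and strictly positive, which is precisely where the hypotheses $d\geq 2$ and $x>1$ are used (if $d=1$ the sum would be the single term $x^0=1$, and the strict inequality would fail). An alternative route would be the mean value theorem applied to $t\mapsto t^d$ on $[x-1,x]$, giving $x^d-(x-1)^d=d\,\xi^{d-1}$ for some $\xi\in(x-1,x)$, but this compares most naturally against $d(x-1)^{d-1}$ rather than $x^{d-1}$ and so requires an extra inequality; I therefore prefer the clean telescoping factorization above, which makes the strictness transparent.
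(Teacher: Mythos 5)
Your proof is correct and follows exactly the same route as the paper: factor $x^d-(x-1)^d$ as $(x-(x-1))\sum_{j=0}^{d-1}x^{d-1-j}(x-1)^j$, identify the $j=0$ summand as $x^{d-1}$, and use $x>1$ and $d\geq2$ to see the remaining summands are strictly positive. Your write-up is just slightly more explicit about where strictness comes from; there is nothing to change.
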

\begin{proof} Let $y=x-1$. Then $x,y>0$, $x-y=1$, and 
\begin{equation*}
\begin{split}
x^d-(x-1)^d=x^d-y^d&=(x-y)(x^{d-1}+xy^{d-2}+\dots+y^{d-2}x+y^{d-1})\\[5pt]
&=(x^{d-1}+yx^{d-2}+\dots+y^{d-2}x+y^{d-1})>x^{d-1}
\end{split} 
\end{equation*}
as claimed. 
\end{proof}
Next, we note that if $f(\beta)=\epsilon y^p$ for some $\epsilon$, $p$ and $y$ as in the statement of Theorem \ref{thm:pth+powered+iterate} and some $\beta\in\mathbb{Z}$, then $|\beta|$ is not too large compared to $|c|$.
\begin{lem}\label{lem:pth+power+bound} Let $c\in\mathbb{Z}$ be nonzero, let $d\geq2$, and suppose that $\alpha^d+c=\epsilon y^p$ for some $\alpha,y\in\mathbb{Z}$, some prime $p|d$, and some $\epsilon\in\{\pm{1}\}$. Then $|\alpha|\leq \sqrt{|c|}$ if $d>2$ and $|\alpha|\leq |c|$ if $d=2$.
\end{lem}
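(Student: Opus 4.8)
The plan is to exploit the divisibility $p \mid d$ to turn the hypothesis into a statement about two (signed) $p$th powers differing by $c$, and then to bound the smaller of the two using the elementary gap estimate of Lemma~\ref{lem:consecutive+pth+powers}. Writing $m = d/p$ (a positive integer, since $p \mid d$) and $u = \alpha^{m}$, the equation $\alpha^{d} + c = \epsilon y^{p}$ becomes $u^{p} + c = \epsilon y^{p}$, so that $A := \epsilon y^{p}$ and $B := u^{p} = \alpha^{d}$ are integers with $A - B = c$, $|A| = |y|^{p}$, $|B| = |u|^{p}$, and $|u| = |\alpha|^{m}$. The whole point is that $\alpha^{d}$ is itself a perfect $p$th power, so both sides of the equation are $\pm$ $p$th powers; since $c \neq 0$ they are distinct, and the goal reduces to showing that $|u|$ — hence $|\alpha| = |u|^{1/m}$ — cannot be large relative to $|c|$.

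First I would dispose of the trivial range $|u| \le 1$: here $|\alpha| \le 1 \le \sqrt{|c|} \le |c|$ since $|c| \ge 1$, and there is nothing to prove, so I may assume $|u| \ge 2$. Now I split on the signs of $A$ and $B$. If $A$ and $B$ have opposite signs, then $|c| = |A| + |B| = |y|^{p} + |u|^{p} \ge |u|^{p} > |u|^{p-1}$ immediately. If $A$ and $B$ have the same sign, then $|c| = \big|\,|y|^{p} - |u|^{p}\,\big|$; setting $M = \max(|y|,|u|)$ and noting $M > \min(|y|,|u|)$ because $c \neq 0$, so that the smaller term is at most $(M-1)^{p}$, Lemma~\ref{lem:consecutive+pth+powers} (applied with its $x$ equal to $M \ge 2$ and its exponent equal to $p$) gives
\[
|c| \;\ge\; M^{p} - (M-1)^{p} \;>\; M^{p-1} \;\ge\; |u|^{p-1}.
\]
In either case $|u|^{p-1} < |c|$, i.e. $|u| < |c|^{1/(p-1)}$.

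Finally I would translate this into the stated bound on $|\alpha|$. Since $|u| = |\alpha|^{m} = |\alpha|^{d/p}$, the inequality $|u|^{p-1} < |c|$ reads $|\alpha|^{d(p-1)/p} < |c|$, hence $|\alpha| < |c|^{\,p/(d(p-1))}$. When $d = 2$ we must have $p = 2$, the exponent equals $1$, and we get $|\alpha| < |c|$, so $|\alpha| \le |c|$. When $d > 2$ it suffices to check $p/(d(p-1)) \le 1/2$, equivalently $2p \le d(p-1)$: for $p = 2$ this is $4 \le d$, which holds since $d>2$ even forces $d \ge 4$; for odd $p \ge 3$ it follows from $d \ge p$ (a consequence of $p \mid d$) because $d(p-1) \ge p(p-1) \ge 2p$. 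Thus $|\alpha| < \sqrt{|c|}$ in every case with $d > 2$.

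The main obstacle — and the reason the sign bookkeeping is worth doing carefully — is that Lemma~\ref{lem:consecutive+pth+powers} naturally bounds the \emph{larger} of the two competing $p$th powers, whereas we need a bound on $|u|$, which may be the \emph{smaller} one (this is exactly the situation when $|y| > |u|$, i.e. $\epsilon y^{p}$ overshoots $\alpha^{d}$). Applying the gap estimate to $M = \max(|y|,|u|)$ and then retaining only $M \ge |u|$ resolves this cleanly, and it is what lets a single argument cover all four sign patterns and both parities of $p$. The remaining care is purely arithmetic: verifying that the exponent $p/(d(p-1))$ clears the threshold $1/2$ in the sharpest cases $p = d = 3$ and $p = 2,\, d = 4$, and confirming the separate $d = 2$ endpoint yields exactly $|\alpha| \le |c|$.
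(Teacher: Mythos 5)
Your proof is correct, and it takes a genuinely different (and arguably cleaner) route than the paper's. The paper also substitutes $b=\alpha^{d/p}$ to reduce to $b^p+c=\epsilon y^p$, but it then bounds $|b|$ by invoking an external estimate (\cite[Lemma 2.8]{hindes2025proportion}, giving $|b|\le (|c|/p)^{1/(p-1)}+1$ for odd $p$ and $|b|\le |c|/2+1$ for $p=2$, $\epsilon=1$), splitting into subcases according to the parity of $p$ and the sign $\epsilon$, and checking numerically that the resulting bound clears $\sqrt{|c|}$ once $|c|\ge 2^p-1$. You instead derive the single inequality $|u|^{p-1}<|c|$ directly from the paper's own Lemma~\ref{lem:consecutive+pth+powers} via the $\max/\min$ sign analysis, handling all four sign patterns and both parities of $p$ uniformly, and then push the bound through the exponent bookkeeping $|\alpha|<|c|^{p/(d(p-1))}$ together with the verification $2p\le d(p-1)$ for $d>2$. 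What your approach buys is self-containment (no appeal to the cited lemma) and uniformity; what the paper's buys is a slightly sharper intermediate bound $|b|\le\sqrt{|c|}$ for odd $p$, which you do not need since your conversion step $|\alpha|=|u|^{p/d}$ absorbs the slack. Your observation that the gap estimate must be applied to the \emph{larger} of the two competing powers (since $M\ge |u|$ then transfers the bound to $|u|$) is exactly the point that makes the unified argument work, and your edge cases ($|u|\le 1$; $|y|=|u|$ forcing $c=0$ in the same-sign case; the endpoints $p=d=3$ and $p=2$, $d=4$) are all handled correctly.
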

\begin{proof}
First, consider the case where $p$ is odd. Notice that we may write $b=\alpha^{d/p}$ so that $b^p+c=\alpha^d+c=(\epsilon y)^p$. We claim that $|b| \le \sqrt{|c|}$. Since $|b| = |a^{d/p}| \ge |\alpha|$, it suffices to prove this claim. Rearranging the given equation, we have that $c = (\epsilon y)^p - b^p$. If $|b| = 1$, then we immediately have $|b| = 1 \le \sqrt{|c|}$, so we assume that $|b|\geq2$. It is clear that $|c|$ is minimized when $b$ and $\epsilon y$ are consecutive, so that $|c| \ge \min(2^p-1, 3^p-2^p) = 2^p - 1$. On the other hand, \cite[Lemma 2.8]{hindes2025proportion} implies that $|b|\leq(|c|/p)^{1/(p-1)}+1$. Moreover, it is straightforward to check that $(|c|/p)^{1/(p-1)}+1 \le \sqrt{|c|}$ when $|c| \ge 2^p-1$; thus, the desired result follows.

Now, we consider the case when $p=2$. In the same vein as above, we let $b = \alpha^{d/p}$, so that $b^2 + c = \epsilon y^2$. If $\epsilon = 1$, then \cite[Lemma 2.8]{hindes2025proportion} implies that $|b|\leq |c|/2 + 1$. Then since $\lfloor|c|/2 + 1\rfloor \le |c|$, we see that $|b| \leq |c|$, which is sufficient when $d = 2$. Likewise, when $d \ge 4$, we have that $|\alpha| \le |c|^{d/p} \le \sqrt{|c|}$ as claimed. 

Finally, suppose that $\epsilon = -1$. Then $0 \le b^2 = -y^2 - c \le -c$. But this immediately implies that $|b^2| \le -c$ and so $|b| \le \sqrt{|c|}$ as desired. 
\end{proof}
\begin{lem}\label{lem:compare+to+root+generic} Let $f(x)=x^d+c$ for some $c\in\mathbb{R}$ and $d\geq2$ and let $\rho:=\sqrt[d]{|c|}$. Moreover, assume that $\beta\in\mathbb{R}$ satisfies $||\beta|-\rho|\geq1$. Then the following statements hold:\vspace{.05cm}  
\begin{enumerate}
\item[\textup{(1)}] If $d>2$ and $|c|\geq2$, then $|f^n(\beta)|>\rho^{d-1}>\sqrt{|c|}$ for all $n\geq1$.\vspace{.2cm}  
\item[\textup{(2)}] If $d=2$ and $|c|\geq 3$, then $|f^n(\beta)|>|c|$ for all $n\geq2$. \vspace{.05cm}  
\end{enumerate}
\end{lem}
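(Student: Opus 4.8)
The plan is to reduce both statements to two sub-claims handled by induction on $n$: an \emph{escape estimate} controlling the first one or two iterates, and a \emph{forward-invariance estimate} showing that once $|f^m(\beta)|$ exceeds the target threshold it never drops below it again. Throughout I set $\rho=\sqrt[d]{|c|}$ and observe that the hypothesis $||\beta|-\rho|\geq1$ means exactly that either $|\beta|\geq\rho+1$ or $|\beta|\leq\rho-1$; since $|c|\geq2$ forces $\rho=|c|^{1/d}>1$, both regimes are genuine and $\rho-1>0$. I would also record at the outset the elementary inequality $\rho^{d-1}>\sqrt{|c|}$ for $d>2$, which holds because $\rho>1$ and $(d-1)/d>1/2$ precisely when $d>2$; this settles the second inequality in statement (1) at once.

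For the escape estimate in case (1), I would apply the reverse triangle inequality to $f(\beta)=\beta^d+c$. If $|\beta|\geq\rho+1$, then $|f(\beta)|\geq|\beta|^d-|c|\geq(\rho+1)^d-\rho^d$, and Lemma \ref{lem:consecutive+pth+powers} (with $x=\rho+1>1$) gives $(\rho+1)^d-\rho^d>(\rho+1)^{d-1}>\rho^{d-1}$. If instead $|\beta|\leq\rho-1$, then $|f(\beta)|\geq|c|-|\beta|^d\geq\rho^d-(\rho-1)^d$, and the same lemma (now with $x=\rho>1$) yields $\rho^d-(\rho-1)^d>\rho^{d-1}$. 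Either way $|f(\beta)|>\rho^{d-1}$ after a single step. The forward-invariance step then asserts that $|w|>\rho^{d-1}$ implies $|f(w)|\geq|w|^d-|c|>\rho^{d(d-1)}-\rho^d>\rho^{d-1}$; dividing the final inequality by $\rho^{d-1}$ reduces it to $\rho^{(d-1)^2}>\rho+1$, which I would verify for all $d\geq3$ by checking it at the worst case $\rho=2^{1/d}$ and noting that the left side grows with $d$ while the right side stays bounded. Induction then gives $|f^n(\beta)|>\rho^{d-1}$ for all $n\geq1$.

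For case (2) with $d=2$ and $\rho=\sqrt{|c|}$, the same reverse-triangle split gives $|f(\beta)|\geq 2\sqrt{|c|}-1$ (from $(\rho+1)^2-\rho^2=2\rho+1$ in the upper regime and $\rho^2-(\rho-1)^2=2\rho-1$ in the lower one). A single additional iterate yields $|f^2(\beta)|\geq(2\sqrt{|c|}-1)^2-|c|=3|c|-4\sqrt{|c|}+1$, and setting $u=\sqrt{|c|}$ this exceeds $|c|$ exactly when $2u^2-4u+1>0$, which holds once $u\geq\sqrt3$, i.e.\ $|c|\geq3$. Finally, the invariance estimate $|w|>|c|\Rightarrow|f(w)|\geq|w|^2-|c|>|c|^2-|c|\geq|c|$ (valid since $|c|\geq2$) closes the induction and gives $|f^n(\beta)|>|c|$ for all $n\geq2$; note that the one-step bound $2\sqrt{|c|}-1$ can itself be smaller than $|c|$, which is precisely why the conclusion here begins at $n\geq2$ rather than $n\geq1$.

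The main obstacle I anticipate is not the overall scheme but the verification of the two threshold inequalities at their boundary values, since these are where the precise hypotheses ($|c|\geq2$, and $|c|\geq3$ with $n\geq2$) become sharp. In particular the invariance inequality $\rho^{(d-1)^2}>\rho+1$ must be confirmed uniformly in $d$, with the genuinely tight instance being the smallest admissible $\rho=2^{1/d}$ at small $d$ (for example $d=3$, where $2^{4/3}\approx2.52$ against $2^{1/3}+1\approx2.26$); similarly one must check that $2u^2-4u+1$ becomes positive exactly past $u=\sqrt3$, so that $|c|=3$ is admissible but no smaller value need be. Everything else follows formally from the reverse triangle inequality, Lemma \ref{lem:consecutive+pth+powers}, and induction.
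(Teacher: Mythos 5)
Your proof is correct and follows essentially the same route as the paper's: the reverse triangle inequality splits into the regimes $|\beta|\geq\rho+1$ and $|\beta|\leq\rho-1$, Lemma \ref{lem:consecutive+pth+powers} supplies the one-step escape bound, and an inductive forward-invariance argument closes both statements. The only cosmetic difference is that you verify the invariance threshold directly as $\rho^{(d-1)^2}>\rho+1$ at the boundary value $\rho=2^{1/d}$, whereas the paper routes the same estimate through $|c|^{d-1}-|c|\geq|c|^2-|c|\geq|c|$; both are valid.
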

\begin{proof} When $d\geq3$, we assume that $|c|\geq2$, and when $d=2$, we assume that $|c|\geq3$. 

If $|\beta| -\rho\geq1$ and $d\geq3$, then Lemma \ref{lem:consecutive+pth+powers} implies that 
\[
|f(\beta)|=|\beta^d+c|\geq |\beta|^d-|c|=|\beta|^d-\rho^d\geq(\rho+1)^d-\rho^d>(\rho+1)^{d-1}.
\]
Moreover, since $(\rho+1)^{d-1}\geq\rho+1$, we may continue on in this way inductively to deduce that $|f^n(\beta)|>(\rho+1)^{d-1}$ for all $n\geq1$. In particular, statement (1) holds in this case since $(\rho+1)^{d-1}>\rho^{d-1}> \rho^{d/2}=\sqrt{|c|}$. 

Likewise, when $d=2$ and $|\beta|-1\geq \rho$, we have that $|f(\beta)|\geq (\rho+1)^2-\rho^2=2\rho+1>2\rho$. But then $|f^2(\beta)|\geq |f(\beta)|^2-|c|\geq (2\rho)^2-\rho^2=3\rho^2>|c|$. Moreover, since $3\rho^2>2\rho$, we may repeat this argument inductively to deduce that $|f^n(\beta)|>|c|$ for all $n\geq2$ as claimed.     

Now assume that $\rho-|\beta|\geq1$ and $d\geq3$. Then Lemma \ref{lem:consecutive+pth+powers} implies that
\[|f(\beta)|=|\beta^d+c|\geq |c|-|\beta|^d=\rho^d-|\beta|^d \geq \rho^d-(\rho-1)^d>\rho^{d-1}.\]
In particular, if $d\geq3$ then $|f(\beta)|>\rho^{d-1}\geq\rho^{d/2}=\sqrt{|c|}$. Furthermore, since $|f(\beta)|>\rho^{d-1}$, we compute that
\[|f^2(\beta)|\geq |f(\beta)|^d-|c|\geq\rho^{d(d-1)}-|c|=|c|^{d-1}-|c|\geq|c|^2-|c|\geq|c|.\]
Moreover, since $|f^2(\beta)|\geq |c|=\rho^d>\rho^{d-1}$, we may repeat the above argument inductively to deduce that  $|f^n(\beta)|\geq|c|>\sqrt{|c|}$ for all $n\geq2$. Hence, statement (1) holds in this case.

Finally, assume that $d=2$ and $\rho-|\beta|\geq1$. Then 
\[|f(\beta)|\geq|c|-|\beta|^2\geq \rho^2-(\rho-1)^2=2\rho-1.\]
In particular, computing one more iterate, we see that 
\[|f^2(\beta)|\geq |f(\beta)|^2-|c|\geq(2\rho-1)^2-\rho^2=(2\sqrt{c}-1)^2-|c|.\]
But it is straightforward to check that $(2\sqrt{c}-1)^2-|c|>|c|$ for all $|c|\geq3$. Hence, we deduce that $|f^2(\beta)|>|c|$. But $|c|=\rho^2>2\rho-1$ for all $\rho>1$, so we may repeat the above argument inductively to conclude that $|f^n(\beta)|>|c|$ for all $n\geq2$ as claimed.   
\end{proof}
We now have the tools in place to prove Theorem \ref{thm:pth+powered+iterate}, namely, that if a sufficiently large iterate of $f=x^d+c$ at $x=\alpha$ produces a $p$th power for some $p|d$, then $\alpha$ must be preperiodic for $f$. However, before we begin the technical details of the proof, we include the following sketch to aid the reader. Assume, for simplicity, that $f^6(\alpha)=\epsilon y^p$, that $d>2$, and that $|c|\geq3$. Then $f^5(\alpha)^d+c=f^6(\alpha)=\epsilon y^p$, so that Lemma \ref{lem:pth+power+bound} implies that $|f^5(\alpha)|\leq \sqrt{|c|}$. On the other hand, Lemma \ref{lem:compare+to+root+generic} then implies that the previous iterates $\alpha,f(\alpha),f^2(\alpha),f^3(\alpha),f^4(\alpha)$ are all contained in the set 
\[I_\rho:=\{b\in\mathbb{Z}\,: ||b|-\rho|<1\},\]
where $\rho=\sqrt[d]{|c|}>0$. But $I_\rho$ has at most $4$ elements, so we deduce that $f^n(\alpha)=f^m(\alpha)$ for some $0\leq n<m\leq4$ by the pigeon-hole principle. From here, the explicit descriptions of $\alpha$ and $\epsilon y^p$ given in Theorem \ref{thm:pth+powered+iterate} follow from classification results for the set of preperiodic points of $x^d+c$ when $c\in\mathbb{Z}$. In particular, the proof we give below carefully refines this sketch by including small values of $c$ and $d$ and by decreasing $N$ to $3$ or $4$, when appropriate.          
\begin{proof}[(Proof of Theorem \ref{thm:pth+powered+iterate})] We begin with some notation. Let $c\in\mathbb{Z}$ be nonzero, let $d\geq2$, and let $\rho=\sqrt[d]{|c|}$. Likewise, let $N=3$ when $d>2$ and $N=4$ when $d=2$. Finally, assume that $f^N(\alpha)=\epsilon y^p$ for some $\alpha,y\in\mathbb{Z}$, some $\epsilon=\pm{1}$, and some prime $p|d$. 

We first prove the result in the case when $c$ is sufficiently large. With this in mind, we assume that $|c|\geq2$ when $d>2$ and that $|c|\geq3$ when $d=2$. Furthermore, set $B(c)=\sqrt{|c|}$ when $d>2$ and $B(c)=|c|$ when $d=2$. In particular, with these hypotheses, we note that if either $||\alpha|-\rho|\geq1$ or $||f(\alpha)|-\rho|\geq1$, then it follows from Lemma \ref{lem:compare+to+root+generic} that $|f^{N-1}(\alpha)|>B(c)$. On the other hand,  since $f^{N-1}(\alpha)^d+c=f^N(\alpha)=\epsilon y^p$, Lemma \ref{lem:pth+power+bound} implies that $|f^{N-1}(\alpha)|\leq B(c)$, and we reach a contradiction. Hence, it must be the case that $|\alpha|$ and $|f(\alpha)|$ are both a distance \emph{strictly less than $1$} away from $\rho$. In particular, since $\alpha$ and $f(\alpha)$ are integers, we deduce that $||\alpha|-|f(\alpha)||\leq1$. From here we proceed in cases.  
\\[5pt]
\textbf{Case(1):} Suppose that $|\alpha|=|f(\alpha)|$ and that $d>2$. Then, if $d$ is even, we have that $f(\alpha)=f(|\alpha|)=f(|f(\alpha)|)=f(f(\alpha))$ and so $f(\alpha)$ is a fixed point of $f$. Hence, it follows that $\epsilon y^p=f^N(\alpha)=f^{N-1}(f(\alpha))=f(\alpha)$. Thus, $\epsilon y^p$ is a fixed point of $f$ and $\alpha= \pm{f(\alpha)}=\pm{\epsilon y^p}$. In particular, we obtain the description of $\alpha$ and $\epsilon y^p$ given in statement (3). Now assume that $d$ is odd. If $\alpha=f(\alpha)$, then $\alpha=f^3(\alpha)=\epsilon y^p$ is a fixed point of $f$, which fits the description in statement (2). On the other hand, if $f(\alpha)=-\alpha$, then $c=-\alpha^d-\alpha=-(\alpha^d+\alpha)$. In particular, it follows that $|c|=|\alpha|^d+|\alpha|$. Moreover, we compute that 
\[f^2(\alpha)=f(f(\alpha))=f(-\alpha)=(-\alpha)^d+c=-\alpha^d-\alpha^d-\alpha=-(2\alpha^d+\alpha).\]
Therefore, we deduce that $|f^2(\alpha)|=2|\alpha|^d+|\alpha|=|\alpha|^d+|c|$. But then, $|f^2(\alpha)|>|c|\geq \sqrt{|c|}$, which contradicts Lemma \ref{lem:pth+power+bound} and the fact that $f^2(\alpha)^d+c=f^3(\alpha)=\epsilon y^p$ by assumption; here we use also that $\alpha\neq0$, since $\rho>1$ and $||\alpha|-\rho|<1$. \\[5pt] 
\textbf{Case(2):} Suppose that $|f(\alpha)|=|\alpha|+1$ and that $d>2$. Then $|\alpha|+1=|f(\alpha)|=|\alpha^d+c|\geq |c|-|\alpha|^d$ so that $|\alpha|^d+|\alpha|+1\geq |c|$. Hence, for $d>2$ we see that   
\begin{equation}\label{eq:even+2nd+iterate1}
\begin{split}
|f^2(\alpha)|&\geq |f(\alpha)|^d-|c|\geq(|\alpha|+1)^d-(|\alpha|^d+|\alpha|+1) \\[5pt] 
&\geq d|\alpha|^{d-1}+(d-1)|\alpha|>|\alpha|^{d/2}+|\alpha|+1.\\[5pt]
&\geq \sqrt{|\alpha|^d}+\sqrt{|\alpha|}+\sqrt{1}\geq \sqrt{|\alpha|^d+|\alpha|+1}\\[5pt]
&\geq\sqrt{|c|}. 
\end{split}
\end{equation}
Here we use that $\alpha\neq0$, since $\rho>1$ and $|\alpha-\rho|<1$. However, the bound in \eqref{eq:even+2nd+iterate1} contradicts Lemma \ref{lem:pth+power+bound} and the fact that $f^2(\alpha)^d+c=f^3(\alpha)=\epsilon y^p$ by assumption. \\[5pt]
\textbf{Case(3):} Suppose that $|f(\alpha)|=|\alpha|-1$ and that $d>2$. Then $|\alpha|-1=|f(\alpha)|=|\alpha^d+c|\geq |\alpha|^d-|c|$ so that $|c|\geq |\alpha|^d-|\alpha|+1$. Similarly, $|\alpha|-1=|f(\alpha)|\geq |c|-|\alpha|^d$ so that $|c|\leq|\alpha|^d+|\alpha|-1$. Hence, for $d>2$ we see that
\begin{equation}\label{eq:even+2nd+iterate2}
\begin{split}
|f^2(\alpha)|&=|f(\alpha)^d+c|\geq |c|-|f(\alpha)|^d\geq (|\alpha|^d-|\alpha|+1)-(|\alpha|-1)^d \\[5pt] 
&=\big(|\alpha|^d-(|\alpha|-1)^d\big)-|\alpha|+1  \\[5pt]
&>|\alpha|^{d-1}-|\alpha|+1\geq\sqrt{|\alpha|^d+|\alpha|-1}\\[5pt]
%Can use calculus to get the line above - 
%it can be shown with calculus:
%\begin{align*}
%    \frac{d}{dx}(x^{d-1}-x+1)^2 &= 2(x^{d-1}-x+1)((d-1)x^{d-2}-1) \\
%    &\ge (x^{d-1}+1)((d-1)x^{d-2}-1) \\
%    &\ge (x^{d-1}+1)((d-1)\cdot2-1) \\
%    &\ge (x^{d-1}+1)\cdot d \\
%    &> dx^{d-1} + 1 \\
%    &= \frac{d}{dx}(x^d+x-1)
%\end{align*}
&\geq \sqrt{|c|}.
\end{split}
\end{equation}
Here we use Lemma \ref{lem:consecutive+pth+powers} applied to $\beta=|\alpha|$ and the fact that $|\alpha|\geq2$ (since $|c|\geq2$ by assumption and $|c|\leq|\alpha|^d+|\alpha|-1$ by above).  However, the bound in \eqref{eq:even+2nd+iterate2} contradicts Lemma \ref{lem:pth+power+bound} and the fact that $f^2(\alpha)^d+c=f^3(\alpha)=\epsilon y^p$ by assumption. \\[5pt]
\textbf{Case(4):} Suppose that $d=2$ (so $p=2$ also). As in Case (1), if $|\alpha|=|f(\alpha)|$, then one may check that $\epsilon y^2$ is a fixed point of $f$ and $\alpha=\pm{\epsilon y^2}$. Hence, we obtain one of the descriptions in statement (1). 

Now suppose that $|f(\alpha)|=|\alpha|+1$. Note that it must be the case that $|\alpha|<\rho$, since otherwise we contradict the fact that $|f(\alpha)-\rho|<1$. But then $f(\alpha)=f(|\alpha|)=|\alpha|^2-\rho^2<0$ and thus 
\[|\alpha|+1=|f(\alpha)|=-f(\alpha)=-f(|\alpha|)=-|\alpha|^2-c.\]
In particular, we see that $c=-|\alpha|^2-|\alpha|-1$ and so \vspace{.05cm} 
\[f^2(|\alpha|)=f^2(\alpha)=f(|f(\alpha)|)=f(|\alpha|+1)=(|\alpha|+1)^2+c=(|\alpha|+1)^2-(|\alpha|^2+|\alpha|+1)=|\alpha|.
\vspace{.05cm} 
\]
Hence, $|\alpha|$ a is periodic point of exact period $2$; note that $f(|\alpha|)\neq |\alpha|$, since we have shown $f(|\alpha|)=f(\alpha)<0$. In particular, $\epsilon y^2=f^4(\alpha)=f^4(|\alpha|)=f^2(f^2(|\alpha|))=|\alpha|$ is a point of exact period $2$ and $\alpha=\pm{\epsilon y^2}$, which fits a description in statement (1) of Theorem \ref{thm:pth+powered+iterate}. 

Now suppose that $|f(\alpha)|=|\alpha|-1$. Note that it must be the case that $|\alpha|>\rho$, since otherwise we contradict the fact that $|f(\alpha)-\rho|<1$. But then $f(\alpha)=f(|\alpha|)=|\alpha|^2-\rho^2>0$ and thus 
\[|\alpha|-1=|f(\alpha)|=f(\alpha)=f(|\alpha|)=|\alpha|^2+c.\]
In particular, we deduce that $c=-|\alpha|^2+|\alpha|-1$ and so \vspace{.05cm}
\[f^2(-|\alpha|)=f(|f(\alpha)|)=f(|\alpha|-1)=(|\alpha|-1)^2+c=(|\alpha|-1)^2+(-|\alpha|^2+|\alpha|-1)=-|\alpha|. \vspace{.05cm}\] 
Hence, $-|\alpha|$ a is periodic point of exact period $2$; note that $f(-|\alpha|)\neq -|\alpha|$, since we have shown $f(-|\alpha|)=f(\alpha)>0$. In particular, $\epsilon y^2=f^4(\alpha)=f^4(-|\alpha|)=f^2(f^2(-|\alpha|))=-|\alpha|$ has period $2$ and $\alpha=\pm{\epsilon y^2}$, which fits a description in statement (1) of Theorem \ref{thm:pth+powered+iterate}.\\[5pt] 
Therefore, it remains to consider the case of small constant terms. Specifically, it remains to consider $c=\pm{1},\pm{2}$ when $d=2$ and $c=\pm{1}$ when $d>2$. However, when $d=2$ and $c\in\{1,2\}$, we check with {\tt{Magma}} \cite{Magma} that the equation $f^4(\alpha)=\epsilon y^2$ has no solutions $\alpha,y\in\mathbb{Z}/8\mathbb{Z}$ and $\epsilon=\pm{1}$. Hence, there are no solutions with $\alpha,y\in\mathbb{Z}$, so Theorem \ref{thm:pth+powered+iterate} is vacuously true in these cases. Now when $f(x)=x^2-2$, if $f^4(\alpha)=\epsilon y^2$, then Lemma \ref{lem:pth+power+bound} implies that $f^3(\alpha)\in\{0,\pm{2},\pm{1}\}$. Moreover, repeatedly computing rational preimages $f^{-1}(b)$ for $b\in\{0,\pm{1},\pm{2}\}$, we see that $\alpha\in \{0,\pm{1},\pm{2}\}$. On the other hand, $f^4(0)=f^4(\pm{2})=2$, and $2$ is not of the form $\epsilon y^2$. Thus, it must be the case that $\alpha=\pm{1}$ and $\epsilon y^2=-1$ when $f=x^2-2$, which fits a description in statement (1). Similarly, if $f=x^2-1$, then Lemma \ref{lem:pth+power+bound} implies that  $f^3(\alpha)\in\{0,\pm{1}\}$. Moreover, repeatedly computing rational preimages $f^{-1}(b)$ for $b\in\{0,\pm{1}\}$, we see that $\alpha\in \{0,\pm{1}\}$. Thus, $\epsilon y^2=f^4(\pm{1})=-1$ or $\epsilon y^2=f^4(0)=0$. Hence in either case, $\epsilon y^2$ is a periodic point of exact period $2$ for $f$ and $\alpha=\pm{\epsilon y^2}$, which fits a description in statement (1). 

Finally, we consider the case when $c=\pm{1}$ and $d>2$. If $c=-1$ and $d$ is even, then the the same argument given above for $f=x^2-1$ yields statement (4). Now suppose that $c=-1$ and $d$ is odd. Then Lemma \ref{lem:pth+power+bound} implies that $f^2(\alpha)\in\{0,\pm{1}\}$. Moreover, repeatedly computing rational preimages $f^{-1}(b)$ for $b\in\{0,\pm{1}\}$, we see that $\alpha=1$. However, $f^3(1)=-2$ in this case, which is not of the form $\epsilon y^p$ for any prime $p$ and any $y\in\mathbb{Z}$, and we reach a contradiction. Finally, suppose that $c=1$ and $d>2$. Again, Lemma \ref{lem:pth+power+bound} implies that $f^2(\alpha)\in\{0,\pm{1}\}$. However, if $d$ is even, then $f^2(\alpha)>1$ for all $\alpha\in\mathbb{R}$, and we reach a contradiction. If $d$ is odd, then repeatedly computing rational preimages $f^{-1}(b)$ for $b\in\{0,\pm{1}\}$, we see that $\alpha=1$. However, $f^3(1)=2$ in this case, which is not of the form $\epsilon y^p$ for any prime $p$ and any $y\in\mathbb{Z}$, and we reach a contradiction. This completes the proof of the theorem.                  
\end{proof}
\begin{remark}\label{rem:pth+powers+implies+intro+version} Note that Theorem \ref{thm:pth+powered+iterate} implies Theorem \ref{thm:pth+powered+iterate+intro}. Namely, if $f^n(\alpha)=\epsilon y^p$ for some $\alpha$, $\epsilon$, and $y$ as in Theorem \ref{thm:pth+powered+iterate+intro} and some $n\geq N$, where $N$ is defined in Theorem \ref{thm:pth+powered+iterate}, then $f^N(f^{n-N}(\alpha))=\epsilon y^p$. Hence, Theorem \ref{thm:pth+powered+iterate} implies that $f^{n-N}(\alpha)$ is preperiodic and $\epsilon y^p$ is periodic for $f$ respectively. But then $\alpha$ is also preperiodic for $f$ as claimed.    
\end{remark}
\section{Irreducible polynomials in semigroups}\label{sec:irreduciblity}
We now apply Theorem \ref{thm:pth+powered+iterate}, on the classification of $p$th powered images, to construct irreducible polynomials in unicritically generated semigroups. As a first step, we have the following link between reducible polynomials with a compositional factor of the form $x^d+c$ and $p$th powers.
\begin{prop}\label{prop:irreducibility+test} Let $K$ be a field of characteristic zero, let $w(x)\in K[x]$ be monic and irreducible, and let $u(x)=x^d+c$ for some $c\in K$ and $d\geq2$. Moreover, if $d$ is even, assume that $w$ has even degree.  Then $w\circ u$ is irreducible over $K$ unless $w(u(0))=y^p$ for some $y\in K$ and some prime $p|d$.
\end{prop}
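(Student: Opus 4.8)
The plan is to reduce the irreducibility of the composition $w\circ u$ to the irreducibility of a single binomial over the field generated by a root of $w$, and then to detect reducibility of that binomial by a norm computation that produces exactly the $p$th power condition on $w(u(0))$. Throughout set $m=\deg w$, fix a root $\theta$ of $w$ in an algebraic closure of $K$, and put $L=K(\theta)$, so $[L:K]=m$. Since $u(0)=c$ and $w$ is monic, $w(x)=\prod_i(x-\theta_i)$ over the conjugates $\theta_i$ of $\theta$, whence
\[
w(u(0))=w(c)=\prod_i(c-\theta_i)=(-1)^m\prod_i(\theta_i-c)=(-1)^m\,N_{L/K}(\theta-c).
\]
This identity is the bridge between the two sides of the statement, and the element $a:=\theta-c$ is what I would analyze.

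First I would invoke the standard composition criterion: for $w$ irreducible over $K$ with root $\theta$ and any $u\in K[x]$, the composite $w\circ u$ is irreducible over $K$ if and only if $u(x)-\theta=x^d-a$ is irreducible over $L$. (The reason is that any root $\beta$ of $x^d-a$ satisfies $K(\theta)\subseteq K(\beta)$ and $w(u(\beta))=0$, so the tower $K\subseteq L\subseteq K(\beta)$ forces the minimal polynomial of $\beta$ over $K$ to have degree $m\cdot[K(\beta):L]$; this equals $md=\deg(w\circ u)$ precisely when $x^d-a$ is the minimal polynomial of $\beta$ over $L$, i.e. is irreducible over $L$.) If $a=0$, that is $\theta=c$, then $w(c)=0=0^p$ and we are already in the exceptional case, so I assume $a\neq0$.

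Next, by the Vahlen--Capelli criterion, $x^d-a$ is reducible over $L$ exactly when either (i) $a\in L^p$ for some prime $p\mid d$, or (ii) $4\mid d$ and $a\in-4L^4$. Arguing by contraposition, it suffices to show each case forces $w(u(0))$ to be a $p$th power in $K$ for some prime $p\mid d$. In case (i), writing $a=\gamma^p$ with $\gamma\in L$ and taking norms gives $N_{L/K}(a)=\big(N_{L/K}(\gamma)\big)^p$, so by the displayed identity $w(c)=(-1)^m N_{L/K}(\gamma)^p$. When $p$ is odd, $-1=(-1)^p$ is a $p$th power and hence so is $w(c)$; when $p=2$, the divisibility $2\mid d$ forces $d$ even, so by hypothesis $m=\deg w$ is even, $(-1)^m=1$, and $w(c)$ is a square. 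In case (ii), write $a=-4\gamma^4$; then $N_{L/K}(a)=(-1)^m 2^{2m}N_{L/K}(\gamma)^4$, so $w(c)=2^{2m}N_{L/K}(\gamma)^4=\big(2^{m}N_{L/K}(\gamma)^2\big)^2$ is a square, and here $2\mid 4\mid d$. In every case $w(u(0))$ is a $p$th power with $p\mid d$, as required.

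The routine inputs are the composition criterion and Capelli's theorem, which I would cite. I expect the main obstacle to be the sign-and-parity bookkeeping in the norm descent: the factor $(-1)^m$ must be absorbed into a $p$th power, and this is exactly the point where the hypothesis ``$\deg w$ even whenever $d$ is even'' is forced---it is used only in the $p=2$ subcase of (i). A secondary subtlety is case (ii): although $a\in-4L^4$ need not make $a$ itself a square in $L$, its norm is automatically a perfect square in $K$ (the $(-4)^m$ contributes $2^{2m}$ times a fourth power), so the exceptional $p=2$ conclusion holds there with no parity assumption needed.
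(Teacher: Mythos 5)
Your proposal is correct and follows essentially the same route as the paper's proof: Capelli's composition criterion to reduce to the binomial $x^d-(\theta-c)$ over $K(\theta)$, the Vahlen--Capelli theorem to detect its reducibility, and the norm identity $w(u(0))=(-1)^{\deg w}N_{K(\theta)/K}(\theta-c)$ to descend the $p$th-power condition to $K$, with the hypothesis on $\deg w$ entering exactly in the $p=2$ bookkeeping. The only cosmetic difference is that you treat the $-4L^4$ case by a direct norm computation, where the paper first rewrites $-4z^4=-(2z^2)^2$ and folds it into the $\pm z^2$ case.
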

\begin{proof} 
Let $w$ and $u$ be as above and assume that $w\circ u$ is reducible over $K$. Then Capelli's Lemma implies that $u(x)-\alpha=x^d+c-\alpha$ is reducible over $K(\alpha)$ for some root $\alpha\in\overline{K}$ of $w$. From here, \cite[Theorem 9.1, p. 297]{MR1878556} implies that $\alpha-c=z^p$ for some $z\in K(\alpha)$ and some prime $p|d$ or $\alpha-c=-4z^4=-(2z^2)^2$ when $4|d$. In particular, it must be the case that either $\alpha-c=z^p$ for some odd $p$ and $z\in K(\alpha)$ or $\alpha-c=\pm{z^2}$ for some $z\in K(\alpha)$. On the other hand, since $w\in K[x]$ is irreducible and the norm map $N_{K(\alpha)/K}: K(\alpha)\rightarrow K$ is multiplicative, we have that  \vspace{.05cm}
\begin{equation}\label{eq:norm}
\begin{split} 
\scalemath{.95}{
N_{K(\alpha)/K}(\alpha-c)=(-1)^{\deg(w)}N_{K(\alpha)/K}(c-\alpha)=(-1)^{\deg(w)}w(c)=(-1)^{\deg(w)}w(u(0)).}
\vspace{.05cm}
\end{split} 
\end{equation}
Therefore, if $\alpha-c=z^p$, then we have that $(-1)^{\deg(w)}y^p=w(u(0))$ where $y=N_{K(\alpha)/K}(z)\in K$. In particular, if $p$ is odd, then $w(u(0))$ must be a $p$th power in $K$: when $\deg(w)$ is even, $w(u(0))=y^p$, and when $\deg(w)$ is odd, $w(u(0))=(-y)^p$. On the other hand, when $p=2$ we assume that $\deg(w)$ is even. Hence, if $\alpha-c=\pm{z}^2$, then \eqref{eq:norm} implies that
\vspace{.1cm} 
\[
\scalemath{.93}{
y^2=\pm{1}^{\deg(w)}N_{K(\alpha)/K}(z)^2=N_{K(\alpha)/K}(\pm{z}^2)=N_{K(\alpha)/K}(\alpha-c)=(-1)^{\deg(w)}w(u(0))=w(u(0)).}
\vspace{.1cm} 
\]
Therefore, we deduce in all cases that $w(u(0))$ is a $p$th power in $K$ for some prime $p|d$. 
\end{proof}
As a consequence, we note that if $f=x^d+c$ for some $c\in\mathbb{Z}$ is irreducible over $\mathbb{Q}$, then $f^n$ is irreducible over $\mathbb{Q}$ for all $n\geq1$. Equivalently, $f$ is stable over $\mathbb{Q}$ if and only if $f$ is irreducible over $\mathbb{Q}$; compare to similar results in \cite{PreperiodicPointsandABC,jones2008density}.
\begin{prop}\label{prop:stability} Let $f(x)=x^d+c$ for some $c\in\mathbb{Z}$ and $d\geq2$. If $f$ is irreducible over $\mathbb{Q}[x]$, then $f^n$ is irreducible over $\mathbb{Q}[x]$ for all $n\geq1$.   
\end{prop}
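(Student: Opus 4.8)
The plan is to induct on $n$, the base case $n=1$ being the hypothesis. For the inductive step, assume $f^{n-1}$ is irreducible and write $f^n = f^{n-1}\circ f$. I would apply Proposition \ref{prop:irreducibility+test} with $w=f^{n-1}$ and $u=f=x^d+c$: here $w$ is monic and irreducible by the inductive hypothesis, and when $d$ is even it has even degree $d^{n-1}$, so the hypotheses of the proposition are met. Since $u(0)=c$, we have $w(u(0))=f^{n-1}(c)=f^n(0)$, so the proposition shows $f^n$ is irreducible \emph{unless} $f^n(0)=y^p$ for some prime $p\mid d$ and some $y\in\mathbb{Q}$; as $f^n(0)\in\mathbb{Z}$ and $\mathbb{Z}$ is integrally closed, such a $y$ lies in $\mathbb{Z}$. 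Finally, irreducibility of $f$ forces $c\neq0$ (otherwise $f=x^d$ has the factor $x$) and $c\neq-1$ (otherwise $x^d-1$ has the factor $x-1$). Thus everything reduces to proving that $f^n(0)$ is not a $p$th power for any $n\geq2$ and any prime $p\mid d$.

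For the deep iterates $n\geq N$ (where $N=3$ if $d\geq3$ and $N=4$ if $d=2$), I would apply Theorem \ref{thm:pth+powered+iterate} to $\alpha=f^{n-N}(0)\in\mathbb{Z}$. Since $f^N(\alpha)=f^n(0)=y^p$, the theorem pins down $\beta:=f^n(0)$ as a periodic point of a very restricted shape: a fixed point when $d\geq3$ (statements (2) and (3), statement (4) being excluded because $c\neq-1$), and a fixed point or a point of exact period $2$ when $d=2$. In particular the critical orbit $0,f(0),f^2(0),\dots$ actually reaches the periodic $p$th power $\beta$; let $m$ be the first time it does. The key is now a short descent along the orbit. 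A fixed point $\beta$ satisfies $c=\beta-\beta^d$, so its integer preimages solve $x^d=\beta-c=\beta^d$ and are therefore only $\beta$ (if $d$ is odd) or $\pm\beta$ (if $d$ is even). When $d$ is odd this forces $m=0$, i.e. $0=\beta$ and $c=0$, a contradiction. When $d$ is even the sole extra preimage $-\beta$ is strictly preperiodic, and asking whether it in turn has an integer preimage amounts to $x^d=\beta^d-2\beta$; by Lemma \ref{lem:consecutive+pth+powers} this value lies strictly between consecutive $d$th powers whenever $|\beta|\geq2$, so the descent halts and yields $m\leq1$; but $m\leq1$ forces $0=\pm\beta$, hence $\beta=0$ and $c=0$, while the residual values $\beta\in\{\pm1\}$ (giving $c\in\{0,-2\}$) are ruled out by hand. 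The exact-period-$2$ possibility for $d=2$ is handled identically, using that the relevant off-cycle preimages are governed by a product of two consecutive integers, which cannot be a square.

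The finitely many small iterates, namely $n=2$ for every $d$ and additionally $n=3$ when $d=2$, I would treat directly. For $n=2$ one has $f^2(0)=c^d+c=c\,(c^{d-1}+1)$ with $\gcd(c,c^{d-1}+1)=1$; since the product is a $p$th power, unique factorization in $\mathbb{Z}$ makes each factor a $p$th power up to sign. If $p$ is odd this yields $c=a^p$, so that $x^d+c=(x^{d/p})^p+a^p$ is divisible by $x^{d/p}+a$, contradicting the irreducibility of $f$; if $p=2$ one is led to an equation $t^2-(s^{d-1})^2=\pm1$, forcing $c\in\{0,-1\}$, both already excluded. For $n=3$ with $d=2$, the identity $(c^2+c)^2+c=y^2$ gives $(y-(c^2+c))(y+(c^2+c))=c$; comparing the two factors (each of absolute value at most $|c|$) with their difference $2|c|\,|c+1|$ forces $c\in\{-2,-1,0\}$, and the only new value $c=-2$ gives $f^3(0)=2$, which is not a square.

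The main obstacle is the even-degree descent in the second paragraph. In odd degree a fixed point has a unique integer preimage, so the orbit simply cannot have entered it from elsewhere and the contradiction is immediate; in even degree the extra preimage $-\beta$ keeps the backward chain alive, and controlling it requires combining the preimage structure with the growth estimates of Lemmas \ref{lem:consecutive+pth+powers} and \ref{lem:compare+to+root+generic} to rule out a long approach to $\beta$, plus a finite check for the small values $|\beta|\leq1$ and for the $d=2$ period-$2$ cycles. Once Theorem \ref{thm:pth+powered+iterate} has been invoked, the rest is bookkeeping.
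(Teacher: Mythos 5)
Your proof is correct in outline, but it takes a genuinely different and substantially heavier route than the paper's. You share the same first step: apply Proposition \ref{prop:irreducibility+test} to reduce everything to showing that $f^n(0)$ is never a $p$th power for $n\geq 2$ and $p\mid d$ (with $c\neq 0,-1$). From there the paper, for $d\geq 3$, finishes in two lines with a pure size argument: if $f^n(0)=y^p$ then Lemma \ref{lem:pth+power+bound} forces $|f^{n-1}(0)|\leq\sqrt{|c|}$, while Lemma \ref{lem:oribt+zero} gives $|f^{n-1}(0)|\geq|c|$, a contradiction once $|c|\geq 2$; only $c=\pm 1$ needs a hand check, and the $d=2$ case is outsourced to Jones. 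You instead invoke the full classification of Theorem \ref{thm:pth+powered+iterate} for deep iterates and then run a backward-orbit descent (integer preimages of a fixed point are only $\pm\beta$, and $-\beta$ has no integer preimage since $\beta^d-2\beta$ sits strictly between consecutive $d$th powers), supplemented by bespoke factorization arguments for the shallow iterates $n=2$ (and $n=3$ when $d=2$). Your route buys self-containedness --- in particular you handle $d=2$ directly rather than by citation, and your $n=2$ argument via $c(c^{d-1}+1)=y^p$ with coprime factors is a nice elementary alternative --- but it costs you the case analysis and the reliance on the much deeper Theorem \ref{thm:pth+powered+iterate}, which the paper's proof of this proposition avoids entirely (indeed the paper needs stability as an ingredient \emph{before} combining it with that theorem in Proposition \ref{prop:no+special+irreducibles}). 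One small correction: in your $d=2$ period-two case the off-cycle preimage obstruction is not a product of two consecutive integers; writing the cycle as $\{\beta,\beta'\}$ with $\beta+\beta'=-1$, the preimages of $-\beta$ satisfy $x^2=\beta^2+1$, which is a square only for $\beta=0$ --- the conclusion you want still holds, but the stated reason should be fixed.
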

We begin with the following, very simple, yet useful observation. 
\begin{lem}\label{lem:oribt+zero} Let $f(x)=x^d+c$ for some $|c|\geq2$ and $d\geq2$. Then $|f^m(0)|\geq|c|$ for all $m\geq1$.      
\end{lem}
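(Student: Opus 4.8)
The plan is to proceed by induction on $m$, exploiting the hypothesis $|c|\geq2$ to control the growth of the iterates. For the base case $m=1$, one simply computes $f(0)=0^d+c=c$, so that $|f(0)|=|c|$; this meets the desired bound with equality. The inductive step is where the growth condition does its work.

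For the inductive step, suppose $|f^m(0)|\geq|c|$ and set $x=f^m(0)$, so that $|x|\geq|c|\geq2$ by the inductive hypothesis. The reverse triangle inequality then gives $|f^{m+1}(0)|=|x^d+c|\geq|x|^d-|c|$. Since $d\geq2$ and $|x|\geq2>1$, we have $|x|^d\geq|x|^2\geq|c|^2$, and because $|c|\geq2$ we have $|c|^2=|c|\cdot|c|\geq2|c|$. Chaining these estimates yields $|f^{m+1}(0)|\geq2|c|-|c|=|c|$, which closes the induction.

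I expect no serious obstacle here: the whole argument rests on the single observation that once an iterate has absolute value at least $|c|\geq2$, raising it to the $d$th power overwhelms the additive perturbation by $c$, since $|x|^d$ is then at least $2|c|$ while we subtract only $|c|$. The hypothesis $|c|\geq2$ enters precisely to guarantee the quadratic growth $|c|^2\geq2|c|$; this is the only quantitative input needed, and it is exactly what prevents the iterates from ever dropping below $|c|$. The lemma is therefore best viewed as a simple stability-under-iteration statement whose proof is a short induction rather than a genuinely hard estimate.
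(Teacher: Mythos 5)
Your proof is correct and follows essentially the same route as the paper: an induction using the reverse triangle inequality $|x^d+c|\geq|x|^d-|c|$ together with the hypothesis $|c|\geq2$ to show the iterates never drop below $|c|$. The paper phrases the key estimate as $|c|^d-|c|=|c|(|c|^{d-1}-1)\geq|c|$ while you write $|x|^d\geq|c|^2\geq2|c|$, but these are the same observation.
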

\begin{proof} The claim is obvious when $m=1$. On the other hand, if $|f^k(0)|\geq|c|$ for some $k\geq0$, then we see that 
\[|f^{k+1}(0)|=|(f^k(0))^d+c|\geq|f^k(0)|^d-|c|\geq |c|^{d}-|c|=|c|(|c|^{d-1}-1)\geq|c|(2^{2-1}-1)=|c|.\]
Therefore, it follows by induction that $|f^m(0)|\geq|c|$ for all $m\geq1$ as claimed.   
\end{proof}
\begin{proof}[(Proof of Proposition \ref{prop:stability})] The $d=2$ case follows from \cite[Proposition 4.5]{jones2008density}. Assume that $d\geq3$, that $f$ is irreducible over $\mathbb{Q}$, and that $f^n$ is reducible over $\mathbb{Q}$ for some $n>1$. Moreover, we may assume that $n$ is the minimum iterate with this property. Then Proposition \ref{prop:irreducibility+test} implies that $f^n(0)=y^p$ for some $y\in\mathbb{Z}$ and some prime $p|d$. Then Lemma \ref{lem:pth+power+bound} implies that $|f^{n-1}(0)|\leq \sqrt{|c|}$. However, if $|c|\geq2$, then Lemma \ref{lem:oribt+zero} implies that $|f^{n-1}(0)|\geq|c|$. Hence, $|c|\leq |f^{n-1}(0)|\leq \sqrt{|c|}$, and we obtain a contradiction. Therefore, it must be the case that $|c|\leq1$. However, $f=x^d$ and $f=x^d-1$ are both reducible over $\mathbb{Q}$ and were therefore excluded at the outset. Hence, it suffices to consider $f=x^d+1$. But in this case, it is clear that $f^m(0)>1$ for all $m>1$. In particular, since we still have that $f^n(0)=y^p$, Lemma \ref{lem:pth+power+bound} implies that $f^{n-1}(0)\leq 1$. Thus $n=2$ and $2=f^2(0)=y^p$, a contradiction.          
\end{proof}
Next, to simplify the statements of some results below, we make the following definition.
\begin{defin} Let $K$ be a field and let $f(x)=x^d+c$ for some $c\in K$. Then we say that $f$ contains a \emph{powered fixed} if there exists $y\in K$ and a prime $p|d$ such that $f(y^p)=y^p$. Likewise, we say that $f$ contains a \emph{powered $2$-cycle} if $f(f(y^p))=y^p$ for some $y$ and $p$ with $f(y^p)\neq y^p$.        
\end{defin}
In particular, combining the stability result above with the $p$th powered classification result from the introduction, we prove the following result. Namely, if a semigroup of the form $\langle x^d+c_1,\dots, x^d+c_s\rangle$ contains an irreducible polynomial without a powered fixed point (or 2-cycle when $d=2$), then it contains many irreducible polynomials.  
\begin{prop}\label{prop:no+special+irreducibles} Let $G=\langle x^d+c_1,\dots, x^d+c_s\rangle$ for some $d \ge 2$ and some $c_1,\dots,c_s\in\mathbb{Z}$. Moreover, assume that $G$ contains an irreducible polynomial. Then there is an irreducible $f(x)=x^d+c$ in $G$ and the following statements hold: \vspace{.1cm}  
\begin{enumerate}
    \item[\textup{(1)}] If $d=2$, then either $\{f^4\circ g\,:\, g\in G\}$ is a set of irreducible polynomials in $\mathbb{Q}[x]$ or $f$ has a powered fixed point or powered $2$-cycle.\vspace{.2cm}
    \item[\textup{(2)}] If $d\geq3$, either $\{f^3\circ g\,:\, g\in G\}$ is a set of irreducible polynomials in $\mathbb{Q}[x]$ or $f$ has a powered fixed point.\vspace{.1cm} 
\end{enumerate} 
\end{prop}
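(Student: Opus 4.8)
The plan is to reduce irreducibility of the composite polynomials to an iterated-image $p$th-power question, where Theorem \ref{thm:pth+powered+iterate} applies. First I would use that $G$ contains an irreducible polynomial to extract an irreducible generator $f(x)=x^d+c$. Indeed, any $h\in G$ factors as a composition $h = x^d+c_{i_1}\circ\cdots$ of generators, and if $h$ is irreducible then its outermost compositional factor, namely some generator $x^d+c_{i_1}$, must itself be irreducible (a reducible outer factor would force $h$ reducible by Capelli). So set $f(x)=x^d+c$ to be that irreducible generator. Note $c\neq 0$ since $x^d$ is reducible, and $c\neq -1$ since $x^d-1$ is reducible for $d\geq 2$; more generally irreducibility of $f$ rules out the degenerate constant terms.

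Next I would analyze $w = f^k\circ g$ for the relevant $k$ (namely $k=4$ when $d=2$ and $k=3$ when $d\geq 3$) and apply Proposition \ref{prop:irreducibility+test} inductively to peel off the outer copies of $f$. The key point is that by Proposition \ref{prop:stability}, $f^k$ is irreducible for all $k$, so $f^k$ is a legitimate monic irreducible outer factor; moreover when $d$ is even, $\deg(f^k)=d^k$ is even, so the parity hypothesis of Proposition \ref{prop:irreducibility+test} is satisfied. Now suppose $f^k\circ g$ is reducible for some $g\in G$ (which, being a composition of the $f_i$, is itself monic). I would argue that reducibility of $f^k\circ g$ forces, via repeated application of the irreducibility test, that at some intermediate stage $f^{j}\circ g$ is reducible with $f^{j-1}\circ g$ irreducible; the test then yields that the value of the inner polynomial $f^{j-1}\circ g$ composed appropriately lands at a $p$th power. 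Concretely, evaluating the Capelli/norm condition at $0$ and using $g(0)\in\mathbb{Z}$, I would obtain $f^m(\alpha)=\epsilon y^p$ where $\alpha=g(0)$ and $m\geq k$ counts the outer $f$'s plus whatever iterates of $f$ already sit inside $g$'s leading structure. The crucial bookkeeping is to guarantee $m\geq N$ (with $N=4$ for $d=2$, $N=3$ for $d\geq 3$) so that Theorem \ref{thm:pth+powered+iterate} applies.

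Once Theorem \ref{thm:pth+powered+iterate} applies, it forces $\epsilon y^p$ to be a \emph{periodic} point of $f$, and its explicit form (statements (1)--(4) of that theorem) pins down $\epsilon y^p$ to be either a fixed point or, when $d=2$, a point of exact period $2$. Translating back: the $p$th power $y^p$ that arises is exactly $f^{m-1}(\alpha)$ composed into the orbit, and the structure theorem says this value is a periodic $p$th power of $f$. I would then unwind this to the conclusion that $f$ has a powered fixed point (when $d\geq 3$), or a powered fixed point or powered $2$-cycle (when $d=2$), matching the definition just introduced. Thus either no such reducibility occurs — giving that $\{f^k\circ g:g\in G\}$ is a set of irreducible polynomials — or $f$ carries the exceptional powered periodic structure.

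The main obstacle I anticipate is the bookkeeping in the inductive peeling step: ensuring that when $f^k\circ g$ is reducible, the resulting $p$th-power equation is of the form $f^m(\alpha)=\epsilon y^p$ with $m$ genuinely at least $N$, rather than a smaller iterate to which Theorem \ref{thm:pth+powered+iterate} does not apply. This is exactly why the proposition uses $k=4$ in the $d=2$ case and $k=3$ for $d\geq 3$: these outer exponents are calibrated so that even in the worst case (where the inner $g$ contributes no extra iterates of $f$ and the first reducibility appears at the outermost composition) the index $m$ reaches the threshold $N$. Handling the case where $g$ itself begins with copies of $f$ (so that $f^k\circ g$ already contains more than $k$ nested $f$'s) only helps, since it raises $m$. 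The parity condition for even $d$ and the exclusion of the degenerate $c\in\{0,-1\}$ values must be tracked carefully throughout so that Propositions \ref{prop:irreducibility+test} and \ref{prop:stability} remain applicable at every stage.
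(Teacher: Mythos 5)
Your proposal is correct and follows essentially the same route as the paper: extract an irreducible generator $f$, use Proposition \ref{prop:stability} to get $f^N$ irreducible, apply Proposition \ref{prop:irreducibility+test} repeatedly to reduce reducibility of $f^N\circ g$ to an equation $f^N(\alpha)=\epsilon y^p$ with $\alpha\in\mathbb{Z}$, and invoke Theorem \ref{thm:pth+powered+iterate}. The bookkeeping worry you raise is moot: the induction appends the inner generators of $g$ one at a time to the already-irreducible outer composition, so the resulting condition is always exactly $f^N(\alpha)=\epsilon y^p$ for some integer $\alpha$ (not necessarily $g(0)$), and the threshold $N$ is met automatically.
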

\begin{proof} Let $N=4$ when $d=2$ and $N=3$ otherwise. If  $G=\langle x^d+c_1,\dots, x^d+c_s\rangle$ contains an irreducible polynomial, then some generator $x^d+c_i$ must be irreducible. Let $f(x)=x^d+c$ be such an element. Then Proposition \ref{prop:stability} implies that $f^N$ is irreducible also. Now assume that $f^N\circ g$ is reducible for some $g\in G$. Then repeated application of Proposition \ref{prop:irreducibility+test} implies that $f^N(\alpha)=y^p$ for some $\alpha,y\in\mathbb{Z}$ and some prime $p|d$. But then Theorem \ref{thm:pth+powered+iterate} implies that $y^p$ must be a fixed point when $d\geq3$, and a fixed point or point of exact period $2$ when $d=2$.           
\end{proof}
Moreover, leveraging the fact that two maps of the form $x^d+c$ with the same fixed point must be equal, we deduce the following result in the odd degree case. 
\begin{remark}\label{rem:fixed+pts+unique} We note that the fixed points of $f(x)=x^d+c$ for nonzero $c\in\mathbb{Z}$ and $d\geq3$ are unique: if $y,z\in\mathbb{Z}$ satisfy $y-y^d=z-z^d$, then reordering gives $z^d+(y-z)=y^d$. Now assume for a contradiction that $y\neq z$. Then Lemma \ref{lem:pth+power+bound} applied to both $z$ and $y$ implies that that $\max\{|y|,|z|\}\leq \sqrt{2\max\{|y|,|z|\}}$. However, this bound forces $\max\{|y|,|z|\}\leq2$. On the other hand note that $y,z\not\in\{0,1\}$ since $c\neq0$. Assume next that $y=-1$, then since $z\neq y$ we must have that $|z|=2$. Moreover, $d$ must be even since otherwise $c=0$. But then $2=|-2|=|y-y^d|=|z||1-z^{d-1}|$ and so $|1-z^{d-1}|=1$. However, this forces $z^{d-1}=0,2$, which is a contradiction since $|z|=2$ and $d-1>1$. Moreover, by symmetry we have that $z\neq-1$. Therefore, it must be the case that $|y|=|z|=2$ and $y\neq z$. Without loss, say $y=2$ and $z=-2$. However, in this case we deduce from the fact that $y-y^d=z-z^d$ that $-2=2$ when $d$ is even and $1=2^{d-1}$ when $d$ is odd. In either case, we reach a contradiction and deduce that $y=z$ as claimed.     
\end{remark}
\begin{prop}\label{prop:special+irreducible+odd} Let $G=\langle x^d+c_1,\dots, x^d+c_s\rangle$ for some odd $d \ge 3$ and some $c_1,\dots,c_s\in\mathbb{Z}$. Moreover, assume that $G$ contains an irreducible polynomial $f_1(x)=x^d+c_1$ with a powered fixed point $y^p$ and another polynomial $f_2(x)=x^d+c_2$ such that $c_2\neq y^p$. Then the following statements hold:
\vspace{.1cm}  
\begin{enumerate}
    \item[\textup{(1)}] If $c_2=0$, then $\{f_1^3\circ f_2\circ f_1\circ g\,:\, g\in G\}$ 
    is a set of irreducible polynomials.    \vspace{.2cm}
    \item[\textup{(2)}] If $c_2\neq 0$, then $\{f_1^3\circ f_2^3\circ g\,:\, g\in G\}$ 
    is a set of irreducible polynomials. \vspace{.2cm}
\end{enumerate} 
\end{prop}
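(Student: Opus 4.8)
The plan is to prove both statements by the mechanism already used for Proposition \ref{prop:no+special+irreducibles}: show the relevant composite is irreducible by peeling off its inner factors one at a time and applying the Capelli-type criterion of Proposition \ref{prop:irreducibility+test} at each stage. Since $f_1$ is irreducible, Proposition \ref{prop:stability} gives that $f_1^3$ is irreducible; writing the candidate composite as $f_1^3\circ\Phi$ with $\Phi$ a composition of generators, repeated application of Proposition \ref{prop:irreducibility+test} shows the composite is irreducible unless $f_1^3(\beta)=z^p$ for some $z\in\mathbb{Z}$, some prime $p\mid d$, and some $\beta$ among the finitely many ``partial orbit'' values obtained by evaluating a tail of $\Phi$ at $0$. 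The first reduction I would record is a powered-fixed-point dichotomy: applying Theorem \ref{thm:pth+powered+iterate}(2) to $f_1$ (legitimate because $c_1\neq0$, as $f_1\neq x^d$ is irreducible) together with uniqueness of integer fixed points (Remark \ref{rem:fixed+pts+unique}) shows that $f_1^3(\beta)$ is a $p$th power if and only if $\beta=y^p$. Thus in both cases everything reduces to checking that none of the partial-orbit values $\beta$ equals $y^p$.

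For statement (2), where $c_2\neq0$, the values of $\beta$ that occur are $c_2=f_2(0)$, then $f_2^2(0)$, and then $f_2^3(\delta)$ as $\delta$ ranges over the orbit-of-$0$ values of the inner factors. Here $c_2\neq y^p$ by hypothesis. For every value of the form $f_2^3(\delta)$ I would invoke Theorem \ref{thm:pth+powered+iterate} directly for $f_2$ (allowed since $c_2\neq0$ and $d\geq3$): if $f_2^3(\delta)=y^p$ then $y^p$ is a fixed point of $f_2$, so $f_1$ and $f_2$ share the fixed point $y^p$, forcing $c_1=c_2$ and contradicting distinctness of the generators. The only value not covered is $f_2^2(0)$, which has too few iterates for the theorem; here I would instead use Lemma \ref{lem:pth+power+bound}: if $f_2^2(0)=c_2^d+c_2$ were a $p$th power then $|c_2|\leq\sqrt{|c_2|}$, forcing $c_2=\pm1$, and then $f_2^2(0)=\pm2$ is visibly not a $p$th power. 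This exhausts case (2).

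For statement (1), where $c_2=0$ and $f_2=x^d$, the point of inserting the extra copy of $f_1$ is that every partial-orbit value feeding into the outer $f_1^3$ then has the shape $(f_1(\delta))^d$ (together with the single value $0$ coming from $f_2(0)=0$, which is not $y^p$ since $c_1\neq0$). So I must show $(f_1(\delta))^d\neq y^p$ for all $\delta\in\mathbb{Z}$. Setting $\eta=f_1(\delta)=\delta^d+c_1$ and assuming $\eta^d=y^p$, the fixed-point equation $f_1(\eta^d)=\eta^d$ gives $c_1=\eta^d-\eta^{d^2}$, and substituting into $\eta=\delta^d+c_1$ yields $\delta^d=\eta\,(\eta^{d^2-1}-\eta^{d-1}+1)$. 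The two factors are coprime (the second is $\equiv1\bmod\eta$), so since $d$ is odd each factor is a perfect $d$th power; in particular $\eta=a^d$. The cases $|a|\leq1$ force $c_1=0$, so $|a|\geq2$, and then with $w:=a^{d^2-1}$ one finds $\eta^{d^2-1}-\eta^{d-1}+1=w^d-a^{d(d-1)}+1=b^d$. Comparing $w^d-b^d=a^{d(d-1)}-1$ against the gap estimate $w^d-(w-1)^d>w^{d-1}=a^{(d^2-1)(d-1)}\geq a^{d(d-1)}$ from Lemma \ref{lem:consecutive+pth+powers} squeezes $b$ strictly between $w-1$ and $w$, a contradiction.

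I expect the main obstacle to be exactly this last step of statement (1): ruling out $(f_1(\delta))^d=y^p$. Unlike statement (2), one cannot apply Theorem \ref{thm:pth+powered+iterate} to $f_2=x^d$ (it has $c=0$, and indeed every value of $x^d$ is a $p$th power), and the fixed point $y^p$ genuinely can be a perfect $d$th power (for instance $f_1=x^3-504$ has the powered fixed point $8=2^3$), so the conclusion is \emph{false} without the hypothesis that $\eta$ lies in the image of $f_1$. The crux is therefore the elementary but delicate argument showing that $\delta^d=\eta(\eta^{d^2-1}-\eta^{d-1}+1)$ has no solution with $|\eta|\geq2$; the coprime-factorization-plus-consecutive-powers computation above is the part to be carried out with care, whereas everything else is bookkeeping with Proposition \ref{prop:irreducibility+test} and the already-established Theorem \ref{thm:pth+powered+iterate}.
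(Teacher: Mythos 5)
Your proof is correct. For statement (2) it coincides with the paper's argument: $f_2(0)=c_2\neq y^p$ by hypothesis, $f_2^2(0)=y^p$ is excluded by Lemma \ref{lem:pth+power+bound} (forcing $c_2=\pm{1}$ and then $f_2^2(0)=\pm{2}$), and $f_2^3(\delta)=y^p$ is excluded by applying Theorem \ref{thm:pth+powered+iterate} to $f_2$ and comparing fixed points via Remark \ref{rem:fixed+pts+unique}. Where you genuinely diverge is the crux of statement (1), ruling out $(f_1(\delta))^d=y^p$. Writing $t=f_1(\delta)$, the paper rearranges $t=\delta^d+t^d-t^{d^2}$ into $(t^d)^d+(t-t^d)=\delta^d$ and applies Lemma \ref{lem:pth+power+bound} one more time, with base point $t^d$ and constant term $t-t^d$ (nonzero once $|t|\ge 2$), to get $|t|^d\le\sqrt{|t|+|t|^d}$, which is absurd for $|t|\ge2$ and $d\ge3$; the cases $|t|\le1$ give $c_1=0$ just as in your write-up. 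You instead factor $\delta^d=\eta\,(1-\eta^{d-1}+\eta^{d^2-1})$ into coprime pieces, use oddness of $d$ to make each piece a perfect $d$th power, and squeeze $b$ strictly between $w-1$ and $w$ via Lemma \ref{lem:consecutive+pth+powers}; the exponent comparison $(d^2-1)(d-1)\ge d(d-1)$ and the positivity of $w=a^{d^2-1}$ check out, so this route is also valid (and the degenerate case $\delta=0$ is absorbed, since the second factor is visibly positive). The paper's version is shorter and reuses the same lemma that drives the whole section; yours is more hands-on but equally elementary, and it makes transparent why the hypothesis that $\eta$ lies in the image of $f_1$ is essential---a point your example $f_1=x^3-504$ with powered fixed point $8=2^3$ illustrates nicely.
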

\begin{proof} Assume that $d\geq3$ is odd and that $G$ contains an irreducible polynomial $f_1(x)=x^d+c_1$ with a powered fixed point $y^p$ and another polynomial $f_2(x)=x^d+c_2$ such that $c_2\neq y^p$. Then $f_1^3$ is is irreducible by Proposition \ref{prop:stability}. Hence, Proposition \ref{prop:irreducibility+test} implies that $f_1^3\circ f_2$ is irreducible unless $f_1^3(f_2(0))=z^q$ for some $z\in\mathbb{Z}$ and some prime $q|d$. However, in the latter case, Theorem \ref{thm:pth+powered+iterate} implies that $c_2=f_2(0)=z^q$ is a fixed point of $f_1$. But $y^p$ is a fixed point of $f_1$ by assumption and fixed points are unique by Remark \ref{rem:fixed+pts+unique}. Thus, $c_2=y^p$, and we reach a contradiction. In particular, we deduce that $f_1^3\circ f_2$ is irreducible.       

From here, we first handle the case when $c_2\neq0$. Note that if $f_1^3\circ f_2^2$ is reducible, then $f_1^3(f_2^2(0))=z^q$ by Proposition \ref{prop:irreducibility+test}. But then again we have $f_2^2(0)=z^q=y^p$ by Theorem \ref{thm:pth+powered+iterate} and Remark \ref{rem:fixed+pts+unique}. Hence, $f_2(0)^d+c_2=f_2^2(0)=y^p$, and so Lemma \ref{lem:pth+power+bound} implies that $|c_2|=|f_2(0)|\leq\sqrt{|c_2|}$. Hence, it must be the case that $c_2=\pm{1}$. However, when this is the case we have that $f_2^2(0)=\pm{2}$, which is not of the form $y^p$. Therefore, we deduce that $f_1^3\circ f_2^2$ is irreducible. Finally, if $f_1^3\circ f_2^3\circ g$ is reducible where $g\in G$ or $g(x)=x$, then repeated application of Proposition \ref{prop:irreducibility+test} implies that $f_1^3(f_2^3(\alpha))=z^q$ for some $\alpha,z\in\mathbb{Z}$ and some prime $q|d$. But again Theorem \ref{thm:pth+powered+iterate} and Remark \ref{rem:fixed+pts+unique} applied to the map $f=f_1$ together imply that $f_2^3(\alpha)=y^p$. However, this time applying Theorem \ref{thm:pth+powered+iterate} to $f=f_2$, we deduce that $y^p$ must also be a fixed point of $f_2$. Thus $c_1=y^p-y^{pd}=c_2$, and we reach a contradiction. Therefore, every polynomial of the form $f_1^3\circ f_2^3\circ g$ for $g\in G$ is irreducible as claimed.

Finally, assume that $c_2=0$ so that $f_2(x)=x^d$. Recall that we have already shown that $f_1^3\circ f_2$ is irreducible. Hence, if $f_1^3\circ f_2\circ f_1\circ g$ is reducible for some $g\in G$ or $g(x)=x$, then (as in previous cases) we see that Theorem \ref{thm:pth+powered+iterate}, Proposition \ref{prop:irreducibility+test}, and Remark \ref{rem:fixed+pts+unique} together imply that $f_1^3(f_2(f_1(\alpha)))=y^p$ and so $(\alpha^d+y^p-y^{pd})^d=f_2(f_1(\alpha))=y^p$ for some $\alpha\in\mathbb{Z}$. Now let $t:=\alpha^d+y^p-y^{pd}$ so that $t^d=y^p$ and
\[t=\alpha^d+y^p-y^{pd}=\alpha^d+t^d-t^{d^2}.\]
Rearranging terms gives $(t^d)^d+(t-t^d)=\alpha^d$, so that Lemma \ref{lem:pth+power+bound} implies that $|t^d|\leq \sqrt{|t-t^d|}$. Here we use that $|t|>1$, since otherwise $|y^p|\leq1$ which implies that $c_1=0$ and $f_1$ is reducible, a contradiction. On the other hand, it is straightforward to check that the bound $|t^d|\leq \sqrt{|t-t^d|}$ on $t\in\mathbb{Z}$ implies that $t=0$, and we again contradict our assumption that $f_1=x^d+c_1$ is irreducible. Therefore, $f_1^3\circ f_2\circ f_1\circ g$ is irreducible for all $g\in G$ as claimed.   
\end{proof}
Likewise, we prove an analogous result when $d>2$ is even.
\begin{prop}\label{prop:special+irreducible+even} Let $G=\langle x^d+c_1,\dots, x^d+c_s\rangle$ for some even $d \ge 4$ and some $c_1,\dots,c_s\in\mathbb{Z}$. Moreover, assume that $G$ contains an irreducible polynomial $f_1(x)=x^d+c_1$ with a powered fixed point $y^p$ and another polynomial $f_2(x)=x^d+c_2$ such that $c_2\not\in\{\pm{y^p},-y^p-y^{pd}\}$. Then the following statements hold:
\vspace{.1cm}  
\begin{enumerate}
    \item[\textup{(1)}] If $c_2\in\{0,-1\}$, then $\{f_1^3\circ f_2\circ f_1\circ g\,:\, g\in G\}$ 
    is a set of irreducible polynomials.    \vspace{.2cm}
    \item[\textup{(2)}] If $c_2\not\in\{0,-1\}$, then $\{f_1^3\circ f_2^3\circ g\,:\, g\in G\}$ 
    is a set of irreducible polynomials. \vspace{.2cm}
\end{enumerate} 
\end{prop}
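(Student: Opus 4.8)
The plan is to mirror the odd-degree argument of Proposition~\ref{prop:special+irreducible+odd}, with the extra sign $\epsilon=\pm1$ in case~(3) of Theorem~\ref{thm:pth+powered+iterate} absorbed by the enlarged exceptional set $\{\pm y^p,-y^p-y^{pd}\}$. Throughout I set $w:=y^p$; since $y^p$ is a fixed point of $f_1$ we have $c_1=w-w^d$, and since $f_1$ is irreducible we have $c_1\notin\{0,-1\}$, whence $w\neq0$. Fixed points of $x^d+c$ are unique by Remark~\ref{rem:fixed+pts+unique}, and the generators are distinct, so $c_2\neq c_1$. Every ``head'' polynomial below is a composition of monic maps $x^d+c$, hence monic of degree a power of $d$, which is even; thus the hypothesis of Proposition~\ref{prop:irreducibility+test} is met at each application. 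The engine of the proof is this: if a composition $H\circ g$ is reducible, then a strong induction on the word length of $g$ combined with Proposition~\ref{prop:irreducibility+test} reduces matters to a single equation $H(g(0))=z^q$ with $q\mid d$ prime, and Theorem~\ref{thm:pth+powered+iterate} applied to $f_1$ (valid since $c_1\neq-1$) forces the value fed into $f_1^3$ to equal $\epsilon w$ for some $\epsilon=\pm1$.

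For statement~(2), where $c_2\notin\{0,-1\}$, I would first produce the tower $f_1^3,\ f_1^3\circ f_2,\ f_1^3\circ f_2^2,\ f_1^3\circ f_2^3$ of irreducible polynomials. Stability (Proposition~\ref{prop:stability}) gives the first; reducibility of $f_1^3\circ f_2$ would force $c_2=\pm w$, excluded by hypothesis; reducibility of $f_1^3\circ f_2^2$ would give $f_2^2(0)=\epsilon w$, so that Lemma~\ref{lem:pth+power+bound} applied to $c_2^d+c_2=\epsilon w$ yields $|c_2|\leq\sqrt{|c_2|}$, hence $c_2=1$ and $f_2^2(0)=2$, forcing the impossible $w=\pm2$. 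Running the engine on $f_1^3\circ f_2^3\circ g$ then gives $f_2^3(g(0))=\epsilon w$, and now Theorem~\ref{thm:pth+powered+iterate} also applies to $f_2$ (as $c_2\notin\{0,-1\}$), making $\epsilon w$ a fixed point of $f_2$ and hence $c_2=\epsilon w-w^d$. For $\epsilon=1$ this is $c_2=c_1$, contradicting distinctness; for $\epsilon=-1$ it is $c_2=-w-w^d$, excluded. This is a uniform analogue of the odd case.

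Statement~(1) is where the argument departs from the odd case, precisely because $c_2\in\{0,-1\}$ puts $f_2$ outside the reach of Theorem~\ref{thm:pth+powered+iterate}; this is why an extra copy of $f_1$ is inserted. After checking that $f_1^3\circ f_2$ is irreducible as above and running the engine on $f_1^3\circ f_2\circ f_1\circ g$, I would set $t:=f_1(\alpha)=\alpha^d+c_1$ with $\alpha=g(0)$, so the reduction gives $f_2(t)=\epsilon w$, i.e.\ $t^d=\epsilon w$ if $c_2=0$ and $t^d-1=\epsilon w$ if $c_2=-1$. Substituting $w=\epsilon t^d$ (respectively $\epsilon(t^d-1)$) and $c_1=w-w^d$ into $t=\alpha^d+c_1$ produces a Diophantine equation $\alpha^d=b^d+c'$ with $b,c'$ explicit in $t$; for $c_2=0$ one finds $\alpha^d=(t^d)^d+(t-\epsilon t^d)$. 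Since $\alpha^d$ is a $p$th power, Lemma~\ref{lem:pth+power+bound} forces $|t|\leq1$, and the finitely many values $t\in\{0,\pm1\}$ are checked by hand, each either forcing $c_1=0$ or producing an equation such as $\alpha^d=3$ or $\alpha^d=-1$ with no integer solution.

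The step I expect to be the main obstacle is the degenerate case $c'=t-\epsilon t^d=0$ (possible only when $c_2=0$), in which Lemma~\ref{lem:pth+power+bound} does not apply and the direct check leaves one genuine solution, $t=-1,\ \epsilon=-1$, forcing $w=-1$ and $c_1=-2$. This is not an arithmetic contradiction: $f_1=x^d-2$ is irreducible by Eisenstein and, whenever $d$ has an odd prime factor $p$, really does have $-1=(-1)^p$ as a powered fixed point. The resolution, which is the genuinely new even-degree input, is that $w=-1$ makes the exceptional set $\{\pm w,-w-w^d\}$ equal to $\{1,-1,0\}$, so the standing hypothesis $c_2\notin\{\pm y^p,-y^p-y^{pd}\}$ is incompatible with $c_2\in\{0,-1\}$; hence this surviving branch cannot occur under the hypotheses. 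Isolating this single branch and recognizing that it is eliminated by the coefficient hypothesis rather than by Diophantine means is, I expect, the crux of the write-up.
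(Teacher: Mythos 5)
Your proposal is correct and follows the paper's own strategy almost step for step: Proposition \ref{prop:irreducibility+test} together with Proposition \ref{prop:stability} reduces reducibility of a composition to a single equation $H(\alpha)=z^q$, Theorem \ref{thm:pth+powered+iterate} applied to $f_1$ combined with Remark \ref{rem:fixed+pts+unique} forces the value fed into $f_1^3$ to be $\pm y^p$, and Lemma \ref{lem:pth+power+bound} finishes each branch; your towers $f_1^3\circ f_2^k$ for statement (2) and the substitution $t=f_1(\alpha)$ for statement (1) are exactly the paper's. The one genuine point of divergence is the degenerate branch you isolate in the $c_2=0$ case: $t=-1$, $\epsilon=-1$, hence $y^p=-1$, $c_1=-2$, $f_1=x^d-2$, $\alpha^d=1$. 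The paper attempts to kill this branch by purely Diophantine means, arguing that $(\alpha^d-2)^d=\pm1$ and $d$ even force $\alpha^d-2=1$ and hence the contradiction $\alpha^d=3$; this overlooks the root $\alpha^d-2=-1$, i.e.\ $\alpha=\pm1$, which (as you observe) genuinely satisfies every equation the method produces whenever $d$ has an odd prime factor. Your resolution --- that $y^p=-1$ makes the exceptional set $\{\pm y^p,\,-y^p-y^{pd}\}$ equal to $\{1,-1,0\}$, so the standing hypothesis on $c_2$ is incompatible with $c_2\in\{0,-1\}$ and the branch is vacuous --- is the correct way to close it, and it also covers the analogous $y^p=-1$ sub-branch of the $c_2=-1$ case that the paper instead settles by a separate computation. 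In short, your write-up is not only faithful to the paper's method but repairs a small oversight in its treatment of statement (1).
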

\begin{proof} Assume that $d\geq4$ is even and that $G$ contains an irreducible polynomial $f_1(x)=x^d+c_1$ with a powered fixed point $y^p$ and another polynomial $f_2(x)=x^d+c_2$ such that $c_2\not\in\{\pm{y^p},-y^p-y^{pd}\}$. Then $f_1^3$ is is irreducible by Proposition \ref{prop:stability}. Hence, Proposition \ref{prop:irreducibility+test} implies that $f_1^3\circ f_2$ is irreducible unless $f_1^3(f_2(0))=z^q$ for some $z\in\mathbb{Z}$ and some prime $q|d$. However, in this case, Theorem \ref{thm:pth+powered+iterate} implies that $c_2=f_2(0)=\pm{z^q}$ and $z^q$ is a fixed point of $f_1$. But $y^p$ is also a fixed point of $f_1$ so that $z^q=y^p$ by Remark \ref{rem:fixed+pts+unique}. Thus, $c_2=\pm{y^p}$, and we reach a contradiction. In particular, it must be the case that $f_1^3\circ f_2$ is irreducible.

From here we consider the case when $c_2\not\in\{0,-1\}$. Note that if $f_1^3\circ f_2^2$ is reducible, then $f_1^3(f_2^2(0))=z^q$ by Proposition \ref{prop:irreducibility+test}. But then again, Theorem \ref{thm:pth+powered+iterate} and Remark \ref{rem:fixed+pts+unique} imply that $f_2^2(0)=\pm{y^p}$. Hence, $f_2(0)^d+c_2=f_2^2(0)=\pm{y^p}$, and so Lemma \ref{lem:pth+power+bound} implies that $|c_2|=|f_2(0)|\leq\sqrt{|c_2|}$. Hence, it must be the case that $|c_2|\leq1$ so that $c_2=1$. However, when this is the case we have that $f_2^2(0)=2$, which is not of the form $\pm{y^p}$. Therefore, we deduce that $f_1^3\circ f_2^2$ is irreducible. Finally, if $f_1^3\circ f_2^3\circ g$ is reducible where $g\in G$ or $g(x)=x$, then repeated application of Proposition \ref{prop:irreducibility+test} implies that $f_1^3(f_2^3(\alpha))=z^q$ for some $\alpha,z\in\mathbb{Z}$ and some prime $q|d$. However, again Theorem \ref{thm:pth+powered+iterate} and Remark \ref{rem:fixed+pts+unique} applied to the map $f=f_1$ together imply that $f_2^3(\alpha)=\pm{y^p}$. But then applying Theorem \ref{thm:pth+powered+iterate} to $f=f_2$, we deduce that $\pm{y^p}$ must be a fixed point of $f_2$. If $y^p$ is a fixed point of $f_2$, then $c_1=y^p-y^{pd}=c_2$, and we reach a contradiction. Likewise, if $-y^p$ is a fixed point of $c_2$, then $c_2=(-y^p)-(-y^p)^d=-y^p-y^{pd}$, and we again reach a contradiction. Therefore, every polynomial of the form $f_1^3\circ f_2^3\circ g$ for $g\in G$ is irreducible in this case as claimed.

Next assume that $c_2=0$ so that $f_2(x)=x^d$. Recall that we have already shown that $f_1^3\circ f_2$ is irreducible. Hence, if $f_1^3\circ f_2\circ f_1\circ g$ is reducible for some $g\in G$ or $g(x)=x$, then (as in previous cases) we see that Theorem \ref{thm:pth+powered+iterate}, Proposition \ref{prop:irreducibility+test}, and Remark \ref{rem:fixed+pts+unique} together imply that $f_1^3(f_2(f_1(\alpha)))=y^p$ and $(\alpha^d+y^p-y^{pd})^d=f_2(f_1(\alpha))=\pm{y^p}$ for some $\alpha\in\mathbb{Z}$. Now let $t:=\alpha^d+y^p-y^{pd}$ so that $\pm{t^d}=y^p$ and
\[t=\alpha^d+y^p-y^{pd}=\alpha^d\pm{t^d}-(\pm{t^{d}})^d=\alpha^d\pm{t^d}-t^{d^2}.\]
Rearranging terms gives $(t^d)^d+(t\pm{t^d})=\alpha^d$. Now, if $t\pm{t^d}=0$, then $t=0,\pm{1}$ and so $y=0,\pm{1}$. However, if $y=0,1$, then $f_1=x^d$ is reducible, a contradiction. Likewise, if $y=-1$ and $p=2$, then $f_1=x^d$. On the other hand, if $y=-1$ and $p$ is odd, then $f_1=x^d-2$. But we still have that $(\alpha^d-2)^d=f_2(f_1(\alpha))=\pm{y^p}=\pm{1}$. Moreover, since $d$ is even, $\alpha^d-2=1$ so that $\alpha^d=3$, and we obtain a contradiction. Therefore, we may assume that $|t|>1$ and that  $(t^d)^d+(t\pm{t^d})=\alpha^d$. But then, Lemma \ref{lem:pth+power+bound} implies that $|t|^d\leq \sqrt{|t\pm{t^d}|}\leq\sqrt{|t|+|t|^d}$, which contradicts the fact that $|t|\geq2$. Therefore, it must be the case that every polynomial of the form $f_1^3\circ f_2^3\circ g$ for $g\in G$ is irreducible in this case when $c_2=0$ as claimed.     

Finally, assume that $c_2=-1$ so that $f_2(x)=x^d-1$. We have already shown that $f_1^3\circ f_2$ is irreducible. Hence, if $f_1^3\circ f_2\circ f_1\circ g$ is reducible for some $g\in G$ or $g(x)=x$, then (as in previous cases) we see that Theorem \ref{thm:pth+powered+iterate}, Proposition \ref{prop:irreducibility+test}, and Remark \ref{rem:fixed+pts+unique} together imply that $f_1^3(f_2(f_1(\alpha)))=y^p$ and $f_2(f_1(\alpha))=\pm{y^p}$ for some $\alpha\in\mathbb{Z}$. But then Lemma \ref{lem:pth+power+bound} implies that $|f_1(\alpha)|\leq|c_2|=1$. In particular, since $f_1$ is irreducible (so has not rational roots), it must be the case that $\alpha^d+y^p-y^{pd}=f_1(\alpha)=\pm{1}$. Suppose that $|y|\geq2$. Then $\alpha^d=(y^{p})^d+(-y^p\pm{1})$ and so Lemma \ref{lem:oribt+zero} implies that $|y|^p\leq \sqrt{|y|^p+1}$, which contradicts that $|y|\geq2$. Hence, $y\in\{0,1,-1\}$. But when $y=0,1$, we have that $f_1=x^d$, contradicting the fact that $f_1$ is irreducible. Likewise, if $y=-1$ and $p=2$, then $f_1=x^d$. Therefore, it must be the case that $y=-1$, that $p$ is odd, and that $f_1=x^d-2$. Hence, $(\alpha^d-2)^d-1=\pm{y^p}=\pm{1}$. However, if $(\alpha^d-2)^d-1=1$, then $(\alpha^d-2)^d=2$ and we reach a contradiction. Likewise, if $(\alpha^d-2)^d-1=-1$, then $\alpha^d=2$ and we reach a contradiction. In particular, every polynomial of the form $f_1^3\circ f_2^3\circ g$ for $g\in G$ is irreducible when $c_2=-1$ as claimed.\end{proof}
Next we note that the semigroups we study in this paper are free; compare to a similar result in \cite[Theorem 3.1]{hindes2021orbit}, where the degrees of the maps in the generating set are allowed to be distinct and the constant terms are assumed to be nonzero.
\begin{prop}\label{prop:free} Let $K$ be a field of characteristic zero, let $d\geq2$, and let \[\mathcal{U}_d:=\big\langle\{x^d+c\,:c\in K\}\big\rangle\]
be the semigroup generated by all polynomials of the form $x^d+c$ for some $c\in K$. Then $\mathcal{U}_d$ is a free semigroup. In particular, every finitely generated semigroup $G=\langle x^d+c_1,\dots, x^d+c_s\rangle$ for some $c_1,\dots,c_s\in K$ is also a free semigroup.       
\end{prop}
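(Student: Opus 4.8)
The plan is to prove that no two distinct words in the generators $\{x^d+c : c\in K\}$ represent the same polynomial under composition, which is exactly the assertion that $\mathcal{U}_d$ is free on this set. First I would record the decisive structural feature of peeling off one outer factor: writing $u_c(x)=x^d+c$, if $F=u_{a_1}\circ G$ for any polynomial $G$, then $F(x)=G(x)^d+a_1$. Thus removing the outermost generator amounts to exhibiting $F(x)-a_1$ as the $d$th power of a monic polynomial. Since a composition of $m$ generators is monic of degree $d^m$, comparing degrees of two equal words immediately forces their lengths to agree (this is where $d\ge 2$ is used), so the entire problem reduces to matching coefficients one outer layer at a time.

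The heart of the argument is the uniqueness of this $d$th-power decomposition, which I would isolate as a lemma: if $F-\lambda_1=H_1^d$ and $F-\lambda_2=H_2^d$ for monic polynomials $H_1,H_2$ of the same degree $e\ge 1$ and constants $\lambda_1,\lambda_2\in K$, then $\lambda_1=\lambda_2$ and $H_1=H_2$. Subtracting gives $H_1^d-H_2^d=\lambda_2-\lambda_1$, a constant, and factoring yields
\[
H_1^d-H_2^d=(H_1-H_2)\sum_{i=0}^{d-1}H_1^{\,i}H_2^{\,d-1-i}.
\]
Each summand $H_1^{\,i}H_2^{\,d-1-i}$ is monic of degree $(d-1)e$, so the sum has leading coefficient $d$, which is nonzero because $\Char K=0$; hence the sum has degree exactly $(d-1)e$. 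If $H_1\neq H_2$, the right-hand product then has degree at least $(d-1)e\ge 1$, contradicting that it equals a constant. Therefore $H_1=H_2$, and consequently $\lambda_1=\lambda_2$.

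With this uniqueness in hand, freeness follows by induction on word length. Given an equality $u_{a_1}\circ\cdots\circ u_{a_m}=u_{b_1}\circ\cdots\circ u_{b_n}$, degree forces $m=n$; writing each side as $G^d+a_1$ and $H^d+b_1$ with $G=u_{a_2}\circ\cdots\circ u_{a_m}$ and $H=u_{b_2}\circ\cdots\circ u_{b_m}$ both monic of degree $d^{m-1}$, the lemma gives $a_1=b_1$ and $G=H$; applying the inductive hypothesis to the shorter equality $G=H$ yields $a_i=b_i$ for all $i$, with the length-one case $x^d+a_1=x^d+b_1$ being immediate. This shows $\mathcal{U}_d$ is free. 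The final claim is then automatic: a subsemigroup $G=\langle x^d+c_1,\dots,x^d+c_s\rangle$ is generated by a subset of the free generating set of $\mathcal{U}_d$, so distinct words in the $x^d+c_i$ remain distinct words in $\mathcal{U}_d$ and hence distinct polynomials, making $G$ free on $\{x^d+c_1,\dots,x^d+c_s\}$. The only delicate point, and the sole place the characteristic-zero hypothesis enters, is ensuring the factor $\sum_i H_1^{\,i}H_2^{\,d-1-i}$ does not drop degree; this fails precisely when a prime dividing $d$ equals the characteristic, and everything else is routine degree bookkeeping.
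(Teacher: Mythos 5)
Your proof is correct, and while it shares the paper's overall skeleton (compare degrees to equate word lengths, then peel off the outermost generator and induct), the key step is handled by a genuinely different and more elementary argument. The paper deduces from $F^d-G^d=b_2-b_1$ that a nonzero constant difference would produce a non-constant polynomial solution to the Fermat--Catalan equation $X^d-Y^d=b_2-b_1$, which it rules out by Mason's $abc$-theorem, and then separately argues that $F^d=G^d$ forces $F=\zeta G$ with $\zeta=1$ by monicity. You instead factor
\[
H_1^d-H_2^d=(H_1-H_2)\sum_{i=0}^{d-1}H_1^{\,i}H_2^{\,d-1-i}
\]
and observe that the second factor is a sum of $d$ monic polynomials of degree $(d-1)e$, hence has leading coefficient $d\neq 0$ in characteristic zero and exact degree $(d-1)e\geq 1$; a nonzero first factor would then make the product non-constant, so $H_1=H_2$ and $\lambda_1=\lambda_2$ follow simultaneously. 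This replaces an appeal to Mason's theorem with pure degree bookkeeping, isolates precisely where $\operatorname{char}K=0$ (really only $\operatorname{char}K\nmid d$) enters, and dispenses with the root-of-unity step entirely; the paper's route, by contrast, matches the toolkit it already cites for related orbit results. Both arguments are complete, and your closing observation that a subsemigroup generated by a subset of a free generating set is free correctly handles the final claim.
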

\begin{proof} Suppose that $\theta_1\circ\dots\circ\theta_n=\tau_1\circ\dots\circ \tau_m$ for some unicritial polynomials $\theta_1,\dots,\tau_m\in K[x]$ all of the same degree $d\geq2$ and some $n,m\geq1$. Note that for degree reasons alone,  $n=m$. Also if $n=1$, then there is nothing to prove. On the other hand, for $n>1$ let $F=\theta_2\circ\dots\circ \theta_n$, let $G=\tau_2\circ\dots\circ \tau_n$, let $\theta_1=x^d+b_1$, and let $\tau_1=x^d+b_2$. Then, $F^d-G^d=b_2-b_1$, and so if $b_2\neq b_1$, we obtain non-constant solutions $(X,Y)=(F,G)$ to the Fermat-Catalan equation $X^d-Y^d=(b_2-b_1)$. However, Mason's $abc$-theorem implies that there are no such solutions; see, for instance, \cite[Lemma 3.2]{hindes2021orbit}. %\textcolor{blue}{Alternatively, we can write $F^d - G^d = (F - G)(F^{d-1}+F^{d-2}G+\dots + G^{d-1})$, and the latter factor has degree $d$, so the product can't be a constant. This is just in case you'd prefer an argument that doesn't rely on a deep theorem.} 
Therefore, we deduce that $b_1=b_2$ and $F^d=G^d$. But then $F/G$ is a $d$th root of unity in $K$ so that we may write $F=\zeta G$ for some constant $\zeta\in K$. However, $F$ and $G$ are both monic, which implies that $\zeta=1$. In summary: we have shown that $n=m$, $\theta_1=\tau_1$, and $\theta_2\circ\dots\circ \theta_n=\tau_2\circ\dots\circ \tau_n$. In particular, we may continue on inductively in this way to deduce that $\theta_i=\tau_i$ for all $i\geq1$ as desired.           
\end{proof}
With the previous result in mind, we may define the length of an element $F\in \mathcal{U}_d$.  
\begin{defin} Let $K$ be a field of characteristic zero and let $\mathcal{U}_d$ be as in Proposition \ref{prop:free}. Then we define the \emph{length} of $F\in \mathcal{U}_d$ to be $n$ if $F=\theta_1\circ\dots\circ\theta_n\in G$, where each $\theta_i$ is of the form $x^d+c_i$ for some $c_i\in K$. In this case, we write $\ell(F)=n$.     
\end{defin}
We now have all of the tools in place to prove our main irreducibility result. Namely, outside of a small one-parameter family of exceptions, a semigroup $G=\langle x^d+c_1,\dots,x^d+c_s\rangle$ for some $c_1,\dots,c_s\in\mathbb{Z}$ and some $d\geq2$ contains a positive proportion of irreducible polynomials if and only if it contains at least one such polynomial. 
\begin{thm}\label{thm:main+irreducible+more+specific} Let $G=\langle x^d+c_1,\dots, x^d+c_s\rangle$ for some $d \ge 2$ and $c_1,\dots,c_s\in\mathbb{Z}$ and assume that $G$ contains an irreducible polynomial. Then one of the following statements must hold: \vspace{.1cm} 
\begin{enumerate}
    \item[\textup{(1)}] There exists $F\in G$ with $\ell(F)\leq 6$ such that $\{F\circ g\,:\, g\in G\}$ is a set of irreducible polynomials. \vspace{.3cm}     
    \item[\textup{(2)}] $d\geq4$ is even and $\{c_1,\dots, c_s\}\subseteq \big\{y^p-y^{pd}\,,\;y^p\,,\;-y^p\,,\;-y^p-y^{pd}\big\}$ for some $y\in\mathbb{Z}$ and some prime $p|d$. \vspace{.3cm} 
      \item[\textup{(3)}] $d\geq5$ is odd and $\{c_1,\dots,c_s\}=\big\{y^p-y^{pd}\;,\;y^p\big\}$ for some $y\in\mathbb{Z}$ and some prime $p|d$.   
\end{enumerate}
\end{thm}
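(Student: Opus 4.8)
The plan is to assemble the three structural propositions already proved and, in the ranges where they do not directly apply, to read the exceptional coefficient sets of statements (2) and (3) off of what remains. Since $G$ contains an irreducible polynomial, the observation used in the proof of Proposition \ref{prop:no+special+irreducibles} --- a composition with a reducible outermost factor is reducible --- shows that some generator is irreducible. Applying the dichotomy of that proposition to any irreducible generator $f$, either $\{f^N \circ g : g \in G\}$ (with $N \in \{3,4\}$) is a set of irreducibles, giving statement (1) with $\ell \le 4$, or $f$ has a powered fixed point (and, when $d = 2$, possibly only a powered $2$-cycle). Hence I may assume that every irreducible generator carries a powered fixed point, and for $d \ge 3$ I fix one, say $f_1$, with powered fixed point $y^p$, so that $c_1 = y^p - y^{pd}$.

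Next I would split on the parity of $d$ and feed the remaining generators into Propositions \ref{prop:special+irreducible+odd} and \ref{prop:special+irreducible+even}. For odd $d \ge 3$: if some generator $f_2 = x^d + c_2$ has $c_2 \notin \{c_1, y^p\}$, then $c_2 \ne y^p$, and Proposition \ref{prop:special+irreducible+odd} supplies an $F \in \{f_1^3 \circ f_2 \circ f_1,\ f_1^3 \circ f_2^3\}$ of length $5$ or $6$ with $\{F \circ g\}$ irreducible, so statement (1) holds. For even $d \ge 4$: if some generator has $c_2 \notin \{c_1, y^p, -y^p, -y^p - y^{pd}\}$, then $c_2 \notin \{\pm y^p, -y^p - y^{pd}\}$ and Proposition \ref{prop:special+irreducible+even} likewise produces an $F$ of length at most $6$, again giving statement (1). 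In both parities, the only way to avoid statement (1) is for every coefficient to lie in the relevant finite set.

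Reading off these leftover configurations then proves the theorem in large degree. If $d \ge 4$ is even and all coefficients lie in $\{y^p - y^{pd},\ y^p,\ -y^p,\ -y^p - y^{pd}\}$, this is exactly statement (2); as the set has four elements and the $c_i$ are distinct, necessarily $s \le 4$. If $d \ge 5$ is odd and all coefficients lie in $\{y^p - y^{pd},\ y^p\}$, then either both values occur and statement (3) holds, or only $c_1$ occurs, $G = \langle f_1 \rangle$, and $F = f_1$ gives statement (1) by the stability of $f_1$ (Proposition \ref{prop:stability}); here $s \le 2$. These generator counts are exactly the thresholds appearing in Corollary \ref{cor:irreducible}.

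The main obstacle is the small degrees $d = 2$ and $d = 3$, where statements (2) and (3) are unavailable, so the theorem forces statement (1) even in the boundary configurations that fall outside the hypotheses of Propositions \ref{prop:special+irreducible+odd} and \ref{prop:special+irreducible+even}: namely $d = 3$ with coefficient set $\{y^3 - y^9,\ y^3\}$, and $d = 2$ with $f_1$ carrying a powered fixed point or a powered $2$-cycle. These are precisely the exceptional families settled in small degree by the rational-point techniques of \cite{hindes2025proportion}, which I would invoke to obtain statement (1). For $d = 3$ one can instead argue directly with $F = f_1^3 \circ f_2 \circ f_1$: once $F$ is known to be irreducible, reducibility of any $F \circ g$ would force $f_2(f_1(\beta)) = y^3$ for some $\beta \in \mathbb{Z}$ via Theorem \ref{thm:pth+powered+iterate} and Remark \ref{rem:fixed+pts+unique}, whence $f_1(\beta) = 0$, contradicting that the irreducible $f_1$ has no rational root. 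The delicate point --- and the technical heart of this case --- is the irreducibility of $F$ itself, equivalently of $f_1^3 \circ f_2$: since $f_1^3(f_2(0)) = y^3$ is a perfect cube, Proposition \ref{prop:irreducibility+test} is silent here, and one must instead verify through a valuation argument at a prime $q \mid y$ that the relevant element of the associated number field has $q$-adic valuation indivisible by $p$, hence is not a $p$th power.
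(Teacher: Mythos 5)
Your proposal is correct and follows essentially the same route as the paper: identify an irreducible generator, apply Proposition \ref{prop:no+special+irreducibles} to reduce to the powered-fixed-point case, feed a suitable second generator into Propositions \ref{prop:special+irreducible+odd} and \ref{prop:special+irreducible+even} according to parity, read the exceptional coefficient sets off the remaining configurations, and cite \cite{hindes2025proportion} for the residual $d=2$ and $d=3$ cases (the paper uses Theorem 1.4 and Corollary 4.10 there, respectively). Your supplementary direct sketch for $d=3$ is a reasonable alternative, and you correctly flag that the irreducibility of $f_1^3\circ f_2$ is the unresolved step in it, but since the citation covers that case the proof stands as written.
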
 
\begin{proof}
Let $G=\langle x^d+c_1,\dots, x^d+c_s\rangle$ for some $d \ge 2$ and $c_1,\dots,c_s\in\mathbb{Z}$. The case when $d=2$ follows from \cite[Theorem 1.4]{hindes2025proportion}. In fact, in this case one may find $F\in G$ with $\ell(F)\leq5$ with the desired property. Now suppose that $d\geq3$ and that $G$ contains an irreducible polynomial. Then one of the generators, without loss say $f_1=x^d+c_1$, must also be irreducible. Note also that we may assume that $G$ has at least two generators, since otherwise $G=\langle f_1\rangle$ consists entirely of irreducible polynomials by Proposition \ref{prop:stability}. Now if $f_1$ has no $p$th powered fixed point for some prime $p|d$, then $F=f_1^3$ has the desired property in statement (1) by Proposition \ref{prop:no+special+irreducibles}. Therefore, we may assume that $f_1$ has a fixed point $y^p$ for some $y\in\mathbb{Z}$ and some prime $p|d$. Thus $c_1=y^p-y^{pd}$.

From here, suppose that $d$ is odd. If $\{c_1,\dots,c_s\}\not\subseteq \big\{y^p-y^{pd}\;,\;y^p\big\}$, then there exists $f_2=x^d+c_2\in  G$ such that either $F=f_1^3\circ f_2^3$ or $F=f_1^3\circ f_2\circ f_1$ has the desired property in statement (1) by Proposition \ref{prop:special+irreducible+odd}. Hence, we may assume that $\{c_1,\dots,c_s\}=\big\{y^p-y^{pd}\;,\;y^p\big\}$. Moreover, when $d=3$ the polynomial $F=f_1\circ f_2\circ f_1$ satisfies statement (1) by \cite[Corollary 4.10]{hindes2025proportion}. Therefore, if no such $F$ exists when $d$ is odd, then $\{c_1,\dots,c_s\}=\big\{y^p-y^{pd}\;,\;y^p\big\}$ and $d\geq5$ as in statement (3). 

From here, suppose that $d\geq 4$ is even. If $\{c_1,\dots,c_s\}\not\subseteq \big\{y^p-y^{pd}\,,\;y^p\,,\;-y^p\,,\;-y^p-y^{pd}\big\}$, then there exists $f_2=x^d+c_2\in  G$ such that either $F=f_1^3\circ f_2^3$ or $F=f_1^3\circ f_2\circ f_1$ has the desired property in statement (1) by Proposition \ref{prop:special+irreducible+odd}. Hence, we may assume that $\{c_1,\dots, c_s\}\subseteq \big\{y^p-y^{pd}\,,\;y^p\,,\;-y^p\,,\;-y^p-y^{pd}\big\}$ as in statement (2).  
\end{proof}
\begin{remark} We note that Theorem \ref{thm:main+irreducible+more+specific} implies Theorem \ref{thm:main+irreducible} from the Introduction. Indeed, if $G$ contains an irreducible polynomial and is not generated by polynomials of the form in statement (2) or statement (3) of Theorem \ref{thm:main+irreducible}, then Theorem \ref{thm:main+irreducible+more+specific} implies that there exists $F\in G$ with $\ell(F)\leq6$ such that $\{F\circ g\,:\, g\in G\}$ is a set of irreducible polynomials. But then it is straightforward to check that,
\[\liminf_{B\rightarrow\infty}\frac{\#\{g\in G\,: \deg(g)\leq B\;\text{and $g$ is irreducible over $K$}\}}{\#\{g\in G\,: \deg(g)\leq B\}}\geq \frac{1}{s^6}>0,\]
where $s$ is the number of generators of $G$; here we also use that $G$ is free by Proposition \ref{prop:free}.             
\end{remark}
\bibliographystyle{plain}
\bibliography{Main}
\end{document}